\newcommand{\ddw}[1]{\frac{\partial #1}{\partial w}}
\newcommand{\ddy}[1]{\frac{\partial #1}{\partial y}}
\newcommand{\ddx}[1]{\frac{\partial #1}{\partial x}}
\newcommand{\ddt}[1]{\frac{\partial #1}{\partial t}}
\newcommand{\twoddw}[1]{\frac{\partial^2 #1}{\partial w^2}}
\newcommand{\twoddx}[1]{\frac{\partial^2 #1}{\partial x^2}}
\newcommand{\dd}[2]{\frac{\partial #1}{\partial #2}}
\newcommand{\ddtwo}[2]{\frac{\partial^2 #1}{\partial #2^2}}
\newcommand{\totdd}[2]{\frac{d #1}{d #2}}
\newcommand{\tottwodd}[2]{\frac{d^2 #1}{d {#2}^2}}
\newcommand{\ddNorm}[1]{\frac{\partial #1}{\partial \nu}}
\newcommand{\frechet}[2]{\frac{\delta #1}{\delta #2}}
\newcommand{\boltzmann}[1]{{}^{B}\!S[#1]}
\newcommand{\diff}[2]{D\left[#1,#2\right]}
\newcommand{\varfunc}[3]{#3\left[#1,#2\right]}
\newcommand{\divg}{\text{div}}
\newcommand{\grad}[2]{\text{grad}_{#1}#2}
\newcommand{\rmmf}{(\mathcal{M}, \mathbb{G})}
\newcommand{\mf}{\mathcal{M}}
\newcommand{\mg}{\mathbb{G}}
\newcommand{\mk}{\mathbb{K}}
\newcommand{\kotto}{\mk_\text{Otto}}
\newcommand{\wtwograd}[2]{-\dd{}{#2}\left(\rho\dd{}{#2}\frechet{#1}{\rho}\right)}
\newcommand{\wgenMobgradN}[2]{\Delta\left(#2\Delta\frechet{#1}{\rho}\right)}
\newcommand{\wtwoMobgrad}[3]{-\dd{}{#3}\left(#2\dd{}{#3}\frechet{#1}{\rho}\right)}
\newcommand{\wgenMobgrad}[3]{\ddtwo{}{#3}\left(#2\ddtwo{}{#3}\frechet{#1}{\rho}\right)}    
\newcommand{\ptwoac}{\mathcal{P}_{2,ac}(\Omega)}
\newcommand{\ptwoacarg}[1]{\mathcal{P}_{2,ac}\left(#1\right)}
\newcommand{\pac}[1]{\mathcal{P}_{ac}\left(#1\right)}
\newcommand{\posreal}{{\mathbb{R}^+}}
\newcommand{\ltwomudelta}{L^2_\mu(\Delta;\Omega)}
\newcommand{\deltatwomu}[1]{\mathbf{\Delta}^2_{#1}}
\newcommand{\htildetwo}[1]{\widetilde{H}^{2}_{#1}}
\newcommand{\htildetwoneg}[1]{\widetilde{H}^{-2}_{#1}}
\newcommand{\hdotone}[1]{\dot{H}^{1}_{#1}}
\newcommand{\hdotoneneg}[1]{\dot{H}^{-1}_{#1}}
\newcommand{\ltwo}[1]{L^{2}_{#1}}
\newcommand{\minop}{\wedge}
\newcommand{\gini}{\mathcal{G}}
\newcommand{\giniarg}[1]{\mathcal{G}\left[#1\right]}
\newcommand{\cdspace}{{\mathcal{C}_D}}
\newcommand{\cdspacearg}[1]{{\mathcal{C}_{#1}}}
\theoremstyle{plain}
\newtheorem{theorem}{Theorem}[section]
\newtheorem{lemma}[theorem]{Lemma}
\newtheorem{corollary}[theorem]{Corollary}
\newtheorem{proposition}[theorem]{Proposition}
\theoremstyle{definition}
\newtheorem{definition}[theorem]{Definition}
\newtheorem{structAssumpt}{Structural assumption}[theorem]
\crefname{structAssumpt}{structural assumption}{structural assumptions}
\newtheorem{principle}[theorem]{Principle}
\newtheorem*{principle*}{Principle}
\newtheorem*{question*}{Question}
\theoremstyle{remark}
\newtheorem{remark}[theorem]{Remark}
\numberwithin{equation}{section}
\title[Gradient structures in econophysics]{A gradient flow perspective on McKean-Vlasov equations in econophysics}
\author{David W. Cohen}
\address{Department of Mathematics, Tufts University, Medford, MA}
\email{David.Cohen@Tufts.edu}
\thanks{The author received support from the NDSEG fellowship and extends warm thanks to B.~Boghosian, M.~Johnson, D.~Gentile, C.~Smith, and B.~Mallery, all of Tufts University, for patient listening and feedback. I thank the anonymous referees for their careful reading and commentary.}
\subjclass[2020]{58E30, 91B80, 82C22, 82C31, 35A15}
\keywords{McKean-Vlasov equations, generalized Wasserstein gradient flows, homogeneous Sobolev spaces, variational principles, mean-field theory}
\date{January 2026}
\begin{document}

\begin{abstract}

    We prove that the Gini coefficient of economic inequality is a Lyapunov functional for a class of nonlinear, nonlocal integro-differential equations arising at the intersection of mathematics, economics, and statistical physics. Next, a novel Riemannian geometry is imposed on a subset of probability densities such that the evolutionary dynamics are formally driven by the Gini coefficient functional as a gradient flow. Thus in the same way that classical 2-Wasserstein theory connects heat flow and the Second Law of Thermodynamics by way of Boltzmann entropy, the work here gives rise to a principle of econophysics that is much of the same flavor but for the Gini coefficient.

    The noncanonical Onsager operators associated to the metric tensors are derived and some transport inequalities proven. The new metric relates to the dual norm of a second-order Sobolev-like factor space, in a similar way to how the classical 2-Wasserstein metric linearizes as the dual norm of a first-order, homogeneous Sobolev space.
\end{abstract}

\maketitle

\section{Introduction}\label{sec:intro}

We show an intuitive and new variational interpretation of a class of evolution equations, which arise from a mathematical physics treatment of idealized economic systems, that shines light on the underlying energetic structure in a unified way.

In particular, we prove the existence of a Lyapunov functional for a class of evolutionary systems, and we then introduce a novel metric geometry on a space of probability densities such that the Lyapunov functional, in fact, serves to drive the dynamics as a gradient flow.

The concept of a gradient flow of a functional over a space of probability measures is a fusing of concepts from Riemannian geometry, variational analysis, and partial differential equations. For a bounded, convex domain $\Omega\subset\mathbb{R}^n$ with smooth boundary, $\partial \Omega$, let $\mathcal{P}(\Omega)$ be the set of probability measures and $\mathcal{L}^n$ be the Lebesgue measure. The $2$-Wasserstein theory, from which we draw significant inspiration, examines the infinite-dimensional manifold
\[\ptwoac{} = \{\mu \in \mathcal{P}(\Omega)\,:\, \int_\Omega\,d\mu(x) |x|^2 < \infty \text{ and } \mu = \rho \mathcal{L}^n\}\]
and the metric
\[W_2^2(\mu,\nu) = \inf_{\text{curves }\mu_t}\left\{\int_0^1\,dt\,||\dot{\mu}_t||_{\hdotoneneg{\mu_t}}^2\,:\,\mu_0 = \mu,\, \mu_1 = \nu\right\}\] where for $\mu_t = \rho_t \mathcal{L}^n$
\[||\dot{\mu}_t||^2_{\hdotoneneg{\mu_t}} = ||\nabla \psi_t||_{\ltwo{\mu_t}}^2 \text{ where } \begin{cases}
    \divg(\rho_t\nabla\psi_t) = - \partial_t\rho_t \text{ in } \Omega\\
    \ddNorm{\psi_t} = 0 \text{ on } \partial\Omega.
\end{cases}\]

A celebrated result is that these structures induce a metric on distributions such that the linear parabolic heat flow equation, $\partial_t \rho_t = \Delta \rho_t$, is equivalent to steepest ascent of Boltzmann entropy, $\boltzmann{\rho}$. That is to say, entropy is not only increasing in time under the dynamics of the heat equation but also there is a geometry such that the dynamics may be viewed as \textit{increasing entropy in the fastest possible way} at each instant.

The classical $W_2$ theory and its nonlinear mobility generalization, $W_{2,m}$, induce structures that allow many dissipative equations to be viewed as gradient flows. See, \textit{inter alia}, \cite{MR4294651,MR2565840, MR2448650, MR2672546,MR2921215, MR3558359, MR2581977} and the references contained therein. Over the past twenty-five years, scores of well-known evolution equations have been re-formulated as Wasserstein gradient flows of functionals, often with physical meaning. Despite these abundant and elegant variational interpretations furnished to many PDE, the $W_2$ theory does have limitations.

We study a class of dissipative evolution equations that arise from the mean-field limits of interacting particle systems that model stochastic and kinetic economic systems and have proven that at least one member of this class, the yard-sale model \cite{MR3428664}, \textit{cannot} be expressed as a $W_2$ gradient flow. 

The 2-Wasserstein tangent space $T_\mu W_2(\Omega)$ is essentially the $\mu$-weighted negative first-order homogeneous Sobolev space, $\hdotoneneg{\mu}$, which is a subset of mean-zero functions. The nonlinear integro-differential evolution equation for which we furnished the nonexistence result has two conserved quantities known \textit{a priori}, namely total probability mass and total first moment. The latter is a direct consequence of the microscopic kinetics. To successfully identify an evolution within $W_2$ that conserves the first moment, it would be necessary to move only in tangent space directions that are both mean-zero and first-moment zero. That is to say \begin{equation}
    \dot{\mu}_t \in T_{\mu_t}W_2(\Omega) \cap \{ h:\Omega \rightarrow \mathbb{R} \,:\, \int_\Omega\,dx\, x h(x) = 0\} \nonumber
\end{equation}for all $t.$ This restriction on the tangent element does not seem easy to accommodate, at least in general.

In \cite{MR1842429}, F. Otto writes \begin{quote}
\textit{The merit of the right gradient flow formulation of a dissipative evolution equation is that it separates energetics and kinetics: The energetics endow the state space $M$ with a functional $E$, the kinetics endow the state space with a (Riemannian) geometry via the metric tensor $g$.}
\end{quote} In what follows, we will repeatedly have recourse to the astuteness of this observation.

We investigated alternative metric tensors that induce geometries with tangent spaces that naturally accommodate the additional known conserved quantity, just as $T_\mu W_2(\Omega)$ does for total probability mass. A primary result of this paper is a formal construction that expresses the diffusion approximations to unbiased, wealth-conserving, positivity-preserving kinetic asset exchange models as gradient flows, using a novel geometry, in a subspace of probability distributions with fixed first moment.

The modified metric tensor directly affects the interpretation of the tangent space at a distribution. In our adapted metric in one spatial dimension, the tangent elements are both mean-zero and first-moment zero functions. In this way, the additional conserved quantity is naturally incorporated into the geometry of the variational problem, in agreement with Otto's observation since the conserved quantity follows from the kinetic prescription of the process.

Observations building up in the econophysics community about the monotonicity of economic inequality in this specific class of models can now be understood in a unified fashion. Namely the class of evolution equations arising from the diffusion approximations to idealized economic models  satisfying \cref{def:positivityPreserving,def:unbiased,def:wealthConserving} are \textit{all} the gradient flows of the Gini coefficient of economic inequality, a quadratic functional of the distribution. The only difference from one model to another is the geometry placed upon the state space of wealth distributions.

These models share the same intuitive energetic principle and differ only in the specifics of their kinetics, which we relegate to the metric tensor. This formulation exactly accords with Otto's guiding beacon. In finding the correct gradient flow formulation for these models, a clear line has been drawn between their energetics and kinetics. 

The novel Riemannian metric, along with its associated manipulations, should be viewed in a purely formal light. There is some discussion about the function spaces involved but it should not be considered analytically rigorous. Likewise we generally leave aside questions of well-posedness. This decision is purposeful. The intent of this paper is to motivate, propose, and focus on the new formal gradient structures. We believe that rigorous analysis of the metric is worthwhile to pursue since the formal structures are certainly of interest.

The analysis of the \textit{diffusive transport metric}, found in \cite{MR4961432,MR4987462}, has begun to put this novel metric structure on rigorous footing. The authors of \cite{MR4961432,MR4987462} situate the diffusive transport metric, which is a one-dimensional case of the metric examined here, in a hierarchy of dynamic formulations given by the differential order of the involved continuity equations. The hierarchy begins with the Hellinger metric whose dynamic formulation is a zeroth-order continuity equation, next is the $2-$Wasserstein metric with the familiar first-order continuity equation, and finally the diffusive transport metric whose continuity equation is of second-order.

\subsection{Outline}\label{ssec:outline}
In \cref{sec:KAEM}, the econophysics models on which we focus are defined and fundamental structural assumptions are presented. A new, simple proof is given of the monotonicity of economic inequality under the evolutionary dynamics of the distribution of wealth. Next, we show that at least one model cannot be expressed as a $W_2$ gradient flow.

The novel infinite-dimensional Riemannian metric structure is defined in \cref{sec:novelMetric} and formal computations are carried out to derive the first-order functional calculus. One of the main results, \cref{thm:gradFlowEconophysics}, on expressing the econophysics models as gradient flows is proven in \cref{ssec:gradFlowKaem}. \Cref{apdx:YSMW2mDiscussion} provides heuristic motivation for the choices made in the definitions of \cref{sec:novelMetric}.

A specification of the new metric, analogous to $W_2$ being the linear mobility case of $W_{2,m}$, permits a deeper exploration of the implicated function spaces and associated PDE. Several results about the second-order Sobolev-like factor spaces are presented in \cref{sec:idenDiffCDspaces}. In addition, several transport inequalities are also proven.

Avenues of future study are suggested in \cref{sec:conclusion}.

\subsection{Notation}\label{ssec:notation}
An absolutely continuous measure $\mu$ will have density $\rho$ with respect to the Lebesgue measure, $\mathcal{L}^n$. The values of a time-dependent function at a specific time will be expressed with a subscript $t$, i.e. $\rho_t(x) = \rho(x,t)$. Derivatives with respect to time will be given explicitly as $\partial_t$ or $\ddt{}$ or $\totdd{}{t}$ or with a dot over a time-dependent function, such as $\dot{\mu_t}$. By the Fr\'echet derivative of a functional $F$, which we denote by $\frechet{F}{\rho}$, we mean the first $\ltwo{}$ variation, $\grad{\ltwo{}}{F}$. The space $\ltwo{}(\Omega)$ without explicit reference to a measure means with respect to the Lebesgue measure. Otherwise we write $\ltwo{\mu}(\Omega)$ meaning the space with inner product \[\int_\Omega\,d\mu(x)\,f(x)g(x).\] If $F$ is a functional from a space of functions $X$ over $\Omega\subset\mathbb{R}^n$ to $\mathbb{R}$ then by $\totdd{F}{t}$ there is an implicit reference to a $t$-parameterized curve $u_t: [0,T] \rightarrow X$ such that \[\totdd{F}{t} = \totdd{}{t} \left(F\circ u_t\right).\]

Some of the notation and explication follows \cite{mielke2023introduction}. Let $\rmmf$ be a Riemannian manifold where given $u\in \mf$, $\mg(u):T_u\mf\rightarrow T^*_u\mf$ is symmetric and positive. The Riemannian metric is the symmetric 2-tensor $g_u:\left(T_u\mf \times T_u\mf\right)\rightarrow\mathbb{R}$. For $v, \tilde{v} \in T_u\mf$, \begin{equation}\label{eq:defMetricFinDim}g_u(v,\tilde{v}) = \left\langle\mg(u)v, \tilde{v}\right\rangle,\end{equation} where $\langle\,,\,\rangle$ denotes the pairing between cotangent and tangent space elements. The gradient of a functional is defined via the canonical isomorphism as \begin{equation}\label{eq:gradViaCanonicalIsom}g_u\left(\grad{\mf}{\mathcal{F}(u)},v\right)=\left\langle D\mathcal{F}(u), v\right\rangle\end{equation} for all $v\in T_u\mf$, where $D\mathcal{F}$ is the differential of $\mathcal{F}$. The \textit{Onsager operator}, defined as $\mk(u):=\left(\mg(u)\right)^{-1}$, allows the expression  \[\grad{\mf}{\mathcal{F}(u)} = \left(\mk \circ D\mathcal{F}\right)(u).\]

\section{Kinetic asset exchange models and a thermodynamic second law}\label{sec:KAEM}
In this section, we briefly describe the ideas underlying econophysics and then define the class of evolution equations that we will express as intuitive gradient flows in \cref{sec:novelMetric}. Here we codify a concept that can be seen as the ``second law of econophysics,'' in analogy with the Second Law of Thermodynamics, by proving the existence of a Lyapunov functional. Next we show that a model of particular importance to the author cannot be expressed as a $W_2$ gradient flow, which serves as motivation to move beyond the $W_2$ framework.

Econophysics emerged as a field over the past three decades as the capable tools of kinetic gas theory and statistical mechanics were put to use analyzing idealized economic models \cite{MR2604625, AC2002, MR2551376, BH2002, MR2155255}. The basic recipe, though many more complicated variants exist, is that a large number of interacting agents are viewed identically except for a scalar wealth, transaction rules between two agents are prescribed (just as collisional rules are given for ideal gas particles), and the resulting many-agent system is studied via Boltzmann equations, hydrodynamic limits, diffusion approximations, and mean-field characterizations. 

These methods have had noteworthy success in producing models with few total parameters that can match real-world distributions of wealth to quite high accuracy \cite{MR4686618}.

The objects of study are continuum equations that evolve distributions of wealth forward in time from an initial condition. At time $t$, the probability of selecting an agent with wealth in the interval $[a,b]$ from a large population is $\int_a^b\,dw\, \rho_t(w)$ and the fraction of the total wealth held by agents with wealth in $[a,b]$ is $\int_a^b\,dw\, w \rho_t(w).$

\subsection{Kinetic asset exchange models}\label{ssec:kaem}
We describe a class of kinetic asset exchange models over $N$ agents each with a scalar, positive wealth that is normalized so that the mean wealth is one. Every member of the class of models to be described evolves each agent's wealth in such a way that the total wealth is constant -- this is because in each collisional transaction, the loss of wealth of one agent is precisely the gain of the other. We specify the important properties that the models considered herein satisfy via three structural assumptions.

Let $k=0,1,2,\ldots;$ $\Delta t \in (0,1)$; and $\mathcal{Z}$ be a mean-zero random variable with outcomes in $[-1,1]$.\footnote{Different models, within the class described, have different centered, random perturbations, $\mathcal{Z}$; for example, $\mathcal{Z}_1 \sim \frac{1}{2}(\delta_{-1}+\delta_{+1})$ and $\mathcal{Z}_2 \sim \text{Unif}([-1,1])$ correspond to two different models.} Two agents with indices $i$ and $j$ are selected at random at time $k\Delta t$. Denote the wealth of each agent by $w^i_{k \Delta t}$ and $w^j_{k\Delta t}$ respectively. The binary transaction between the agents indexed by $i$ and $j$ is 
\begin{equation}\label{eq:microtransaction}
\begin{pmatrix}
w_{(k+1)\Delta t}^i\\
w_{(k+1)\Delta t}^j
\end{pmatrix} 
= 
\begin{pmatrix}
w_{k\Delta t}^i\\
w_{k \Delta t}^j
\end{pmatrix} 
+ \sqrt{\gamma\Delta t} \left(w^i_{k \Delta t} \minop w^j_{k \Delta t}\right)\mathcal{Z} 
\begin{pmatrix}
1\\
-1
\end{pmatrix},
\end{equation} where $\gamma \in (0,1)$ and $\minop$ is the $\min$ operator. Clearly \[ \mathbb{E}\left[\left(w^i_{k \Delta t} \minop w^j_{k \Delta t}\right)\mathcal{Z}\right] = 0,\] and define \[ \phi(w^i_{k \Delta t}, w^j_{k \Delta t}):=\mathbb{E}\left[\left(\left(w^i_{k \Delta t} \minop w^j_{k \Delta t}\right)\mathcal{Z}\right)^2\right].\] Note that $\gamma\Delta t \phi(w^i_{k \Delta t}, w^j_{k \Delta t}) = \gamma\Delta t\left(w^i_{k \Delta t} \minop w^j_{k \Delta t}\right)^2 \text{Var}(\mathcal{Z})$ is the component-wise variance of the transaction, \cref{eq:microtransaction}. The boundedness of the perturbation $\mathcal{Z}$ and the scaling by $\left(w^i_{k \Delta t} \minop w^j_{k \Delta t}\right)$ is such that no agent is ever sent to nonpositive wealth so long as all agents are initialized to positive wealth.

The system described above satisfies three structural assumptions, which shall be assumed to apply to all models in the remainder of the paper.

\begin{structAssumpt}[Conservation of wealth]\label{def:wealthConserving}
    The kinetic asset exchange models are wealth conserving, that is the total wealth of the initial condition is the total wealth for all $t>0.$
\end{structAssumpt}

\begin{structAssumpt}[Preservation of positivity]\label{def:positivityPreserving}
    The kinetic asset exchange models are positivity preserving: an initial condition in which each agent has positive wealth evolves so that each agent always has positive wealth.
\end{structAssumpt}

\begin{structAssumpt}[Unbiased exchange]\label{def:unbiased}
The kinetic asset exchange models are unbiased in the sense that \[\mathbb{E}\left[\begin{pmatrix}
w_{(k+1)\Delta t}^i\\
w_{(k+1)\Delta t}^j
\end{pmatrix} 
-
\begin{pmatrix}
w_{k\Delta t}^i\\
w_{k \Delta t}^j
\end{pmatrix} \right]=\begin{pmatrix}
0\\
0
\end{pmatrix}.\]
\end{structAssumpt}

In the language of stochastic processes, an unbiased exchange is a martingale.

The diffusion approximation to the system given by \cref{eq:microtransaction} as $N\rightarrow\infty$ and $\Delta t \rightarrow 0$ is the second-order integro-differential equation of McKean-Vlasov type \cite{MR3443169} \begin{equation}\label{eq:wealthEvolutionEquation}
\ddt{\rho_t(w)} = \twoddw{}\left[\frac{\gamma}{2}\left(\int_{\mathbb{R}^+}\,dy\, \phi(w,y)\rho_t(y)\right)\rho_t(w)\right],\end{equation} where $\rho_t(w)$ is the density of economic agents at wealth $w$ at time $t$. Similar diffusion approximations to idealized economic systems have been studied in \cite{MR3623598, MR3872473}. The absence of a first-order drift term is attributable to the fact that these systems are martingales. The coefficient of the second-order term has the standard mean-field interpretation, as we explain below.

Define the diffusion coefficient \begin{equation}\label{eq:wealthDiffusionCoeff}\diff{w}{\rho} = \frac{\gamma}{2}\int_{\mathbb{R}^+}\,dy\, \phi(w,y)\rho(y),\end{equation} which is nonnegative, and allows \cref{eq:wealthEvolutionEquation} to be expressed as \begin{equation}\label{eq:wealthEvoEqSimple}
\ddt{\rho_t(w)} = \twoddw{}\bigg(\diff{w}{\rho_t}\rho_t(w)\bigg).\end{equation} 

\begin{remark}
    The diffusion coefficient \cref{eq:wealthDiffusionCoeff} is proportional to the time-infinitesimal expected variance experienced by an agent interacting with the mean-field population.
\end{remark}

Evolution equations in this class have at least two constants of motion: (i) the zeroth moment of $\rho$ corresponding to conservation of probability mass and (ii) the first moment of $\rho$ corresponding to conservation of wealth in the closed economic system. In what follows, we always take $\int_{\mathbb{R}^+}\,dw\,\rho_t(w) = 1 = \int_{\mathbb{R}^+}\,dw\,w\rho_t(w)$ for all $t>0$. When these conditions are imposed for the initial condition at $t=0$, the dynamics conserve them.

The natural boundary conditions are $\rho_t(0) = 0$ and $\lim\limits_{w\rightarrow \infty} \rho_t(w) = 0$ for all $t\geq 0.$

\subsection{A thermodynamic second law for kinetic asset exchange}\label{ssec:kaemSecondLaw}

A growing body of work in the field of econophysics suggests there is a principle for kinetic asset exchange models that mirrors the Second Law of Thermodynamics: In an unbiased economic system ``without regularization,'' measures of economic inequality will increase. This broad statement is supported by the many papers, see for example \cite{MR4686618,MR3428664,borgers2023new,MR4544046,MR4267576,MR3475485,MR4185105}, with proofs of the monotonicity of inequality under different kinetic asset exchange systems. We will now make precise the general principle.

Let $M_1:=\{\mu \in  \pac{\posreal} \,:\, \int_{\mathbb{R}^+}\,d\mu(x)\,x = 1\}$ be the subset of probability measures with unit first moment. When the initial condition to a kinetic asset exchange belongs to $M_1$ then the dynamics occur in $M_1$.

Define $G: M_1 \rightarrow \left[-\frac{1}{2},0\right)$ by \begin{equation}\label{eq:scaledGini}
G[\rho] = -\frac{1}{2}\int_{\mathbb{R}^+}\,dx\,\int_{\mathbb{R}^+}\,dy (x\minop y)\rho(x)\rho(y).
\end{equation}

\begin{remark}
The Gini coefficient, $\gini$, is a measure of economic inequality \cite{MR3012052} and when restricted to wealth distributions in $M_1$ has value $\giniarg{\rho} = 2G[\rho]+1$. Moreover, any wealth distribution in which the population has non-negative wealth can be canonically mapped into $M_1$ without changing its Gini coefficient.
\end{remark}

We will refer to $G$ as the \textit{scaled Gini coefficient}. 

\begin{lemma}\label{lem:GiniIden}
    The second-order derivative of the Fr\'echet derivative of $G$ is $\rho(x)$.
\end{lemma}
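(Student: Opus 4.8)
The plan is to write out the first $L^2$ variation of $G$ explicitly, turn the $\minop$ kernel into ordinary integrals by splitting the domain at the diagonal, and then differentiate twice in the spatial variable.

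First I would compute $\frechet{G}{\rho}$. Since $G$ is a quadratic functional, perturbing $\rho\mapsto\rho+\epsilon h$ and collecting the coefficient of $\epsilon$ is immediate; using that the kernel $x\minop y$ is symmetric in its arguments, the two cross terms coincide and the factor $\tfrac12$ in \cref{eq:scaledGini} is exactly what cancels against this doubling, giving
\[\frechet{G}{\rho}(x) = -\int_{\posreal}\,dy\,(x\minop y)\,\rho(y).\]
Next I would split this integral at $y=x$ to eliminate the minimum:
\[\frechet{G}{\rho}(x) = -\int_0^x y\,\rho(y)\,dy \;-\; x\int_x^\infty \rho(y)\,dy.\]
Differentiating in $x$, the boundary contributions $-x\rho(x)$ (from the variable upper limit of the first integral) and $+x\rho(x)$ (from the variable lower limit of the second) cancel, leaving only the tail mass,
\[\ddx{}\frechet{G}{\rho}(x) = -\int_x^\infty \rho(y)\,dy,\]
and differentiating once more via the fundamental theorem of calculus yields
\[\twoddx{}\frechet{G}{\rho}(x) = \rho(x),\]
which is the assertion.

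The computation is routine, so I anticipate no substantial obstacle; the only points deserving a word are (i) that for the quadratic $G$ the first variation is genuinely the single integral displayed above, which relies on symmetry of the kernel, and (ii) the differentiation through the variable limits of integration, where one checks that the $\pm x\rho(x)$ terms cancel — this is legitimate for $\rho\in M_1$ since $x\int_x^\infty\rho(y)\,dy \le \int_x^\infty y\rho(y)\,dy\to 0$ and $\int_0^x y\rho(y)\,dy$ converges. Conceptually, the lemma records that $-(x\minop y)$ is, up to the sign convention used here, a Green's function for $\twoddx{}$ on $\posreal$ with the natural boundary behavior, which is precisely why this kernel recurs throughout the gradient-flow construction of \cref{sec:novelMetric}.
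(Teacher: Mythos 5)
Your proof is correct and follows essentially the same route as the paper's: compute the Fr\'echet derivative using the symmetry of the kernel, then differentiate twice in $x$. The only difference is that you make explicit the splitting of the integral at $y=x$ and the cancellation of the $\pm x\rho(x)$ boundary terms, which the paper leaves implicit.
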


We include the proof despite its simplicity since the result is critically important in the study of kinetic asset exchange models.

\begin{proof}
Since the integral kernel of $G$ is symmetric in its arguments, the Fr\'echet derivative of $G$ is \[
\frechet{G}{\rho(x)} = -\int_{\mathbb{R}^+}\,dy\, (x\minop y)\rho(y).
\] The first $x$-derivative of the Fr\'echet derivative is
\[
\totdd{}{x}\frechet{G}{\rho(x)} = -\int_x^\infty \,dy\,\rho(y)
\] and the next $x$-derivative is
\[
\tottwodd{}{x}\frechet{G}{\rho(x)} = \rho(x).
\]
\end{proof}

Inserting this expression into dynamics of \cref{eq:wealthEvoEqSimple} yields \begin{equation}\ddt{\rho_t(w)} = \twoddw{}\left[\diff{w}{\rho_t}\twoddw{}\frechet{G[\rho_t]}{\rho_t}\right]\label{eq:kaemDiffGini}.\end{equation}

\begin{proposition}\label{prop:giniMonotoneEconophysics}
    The Gini coefficient of economic inequality is monotone increasing under the dynamics of the kinetic asset exchange models defined above.
\end{proposition}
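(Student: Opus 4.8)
The plan is to show $\totdd{}{t} \giniarg{\rho_t} \geq 0$ along solutions of the evolution equation, using the gradient-flow-flavored identity \cref{eq:kaemDiffGini} together with \cref{lem:GiniIden}. Since $\giniarg{\rho} = 2G[\rho] + 1$, it suffices to prove $\totdd{}{t} G[\rho_t] \geq 0$, and because $G$ depends on $t$ only through $\rho_t$, the chain rule for the Fr\'echet derivative gives
\[
\totdd{}{t} G[\rho_t] = \int_{\posreal}\,dw\,\frechet{G[\rho_t]}{\rho_t}\,\ddt{\rho_t(w)}.
\]

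Next I would substitute $\ddt{\rho_t(w)} = \twoddw{}\left[\diff{w}{\rho_t}\twoddw{}\frechet{G[\rho_t]}{\rho_t}\right]$ from \cref{eq:kaemDiffGini} and integrate by parts twice in $w$, moving both derivatives off the diffusion term and onto $\frechet{G[\rho_t]}{\rho_t}$. This transforms the integral into
\[
\totdd{}{t} G[\rho_t] = \int_{\posreal}\,dw\,\diff{w}{\rho_t}\left(\twoddw{}\frechet{G[\rho_t]}{\rho_t}\right)^2,
\]
provided all boundary terms at $w=0$ and $w\to\infty$ vanish. Since $\diff{w}{\rho_t}\geq 0$ (it is a nonnegative average of $\phi$, which is itself an expected square) and the remaining factor is a perfect square, the integrand is pointwise nonnegative, so $\totdd{}{t} G[\rho_t]\geq 0$ and hence $\totdd{}{t}\giniarg{\rho_t}\geq 0$. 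One can additionally remark, via \cref{lem:GiniIden}, that $\twoddw{}\frechet{G[\rho_t]}{\rho_t} = \rho_t(w)$, so the dissipation rate can be written cleanly as $\int_{\posreal}\,dw\,\diff{w}{\rho_t}\,\rho_t(w)^2$, which also exhibits the structure $g_{\rho_t}(\dot\rho_t,\dot\rho_t)$ foreshadowing the gradient flow interpretation of \cref{sec:novelMetric}.

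The main obstacle is the justification of the integration by parts, i.e.\ controlling the boundary contributions. At $w=0$ one uses the natural boundary condition $\rho_t(0)=0$ together with the fact that $\phi(w,y)\to 0$ as $w\to 0$ (noted after \cref{eq:microtransaction}), so $\diff{w}{\rho_t}\to 0$ there; the first boundary term involves $\frechet{G[\rho_t]}{\rho_t}\cdot\ddw{}\big[\diff{w}{\rho_t}\,\rho_t(w)\big]$ and the second involves $\ddw{}\frechet{G[\rho_t]}{\rho_t}\cdot \diff{w}{\rho_t}\,\rho_t(w)$, and in each case a factor vanishing at the endpoint kills the term. At $w\to\infty$ one appeals to sufficient decay of $\rho_t$ and its derivatives (the density, having finite zeroth and first moments, decays, and we work formally as the paper stipulates). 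Since the paper explicitly adopts a formal standpoint and promises ``a new, simple proof,'' I would present the integration-by-parts computation cleanly and note the boundary terms vanish under the stated natural boundary conditions and decay, rather than belaboring the function-space technicalities.
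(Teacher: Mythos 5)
Your proposal is correct and follows essentially the same route as the paper: reduce to $\dot{G}\geq 0$, substitute the form \cref{eq:kaemDiffGini} obtained from \cref{lem:GiniIden}, integrate by parts twice, and conclude from the nonnegativity of $\diff{w}{\rho_t}$ that the dissipation integrand is a weighted square. Your additional bookkeeping of the boundary terms at $w=0$ and $w\to\infty$ simply makes explicit what the paper compresses into ``two IbP and imposing boundary conditions.''
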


By this we mean that if $\rho_t$ is a solution to the evolution equation \cref{eq:wealthEvolutionEquation} then $t\mapsto \giniarg{\rho_t}$ is an increasing function.

\begin{proof}
    The Gini coefficient $\gini$ (over $M_1$) is equal to $2G+1$ so if $\dot{G}\geq 0$ then so too for the Gini coefficient. By direct calculation, \begin{align*}
    \totdd{}{t}\left(G\circ \rho_t\right) &= \int_{\mathbb{R}^+} \,dw\, \frechet{G[\rho_t]}{\rho_t}\ddt{\rho_t} &&\text{definition of the first $L^2$ variation}\\
    &= \int_{\mathbb{R}^+} \,dw\, \frechet{G[\rho_t]}{\rho_t}\twoddw{}\left[D[w,\rho_t]\twoddw{}\frechet{G[\rho_t]}{\rho_t}\right] &&\text{via \cref{eq:kaemDiffGini}} \\
    &= \int_{\mathbb{R}^+} \,dw\, D[w,\rho_t]\left(\twoddw{}\frechet{G[\rho_t]}{\rho_t}\right)^2 &&\text{two IbP and imposing boundary conditions}\\
    &\geq 0 &&\text{since $D[w,\rho]\geq 0$.}
    \end{align*}
\end{proof}

This result can be strengthened to strict monotonicity under the assumption that the variance of the stochastic exchange only vanishes if an agent has zero wealth. See \cite{MR4804189}.

Inequality inexorably increases under the mean-field dynamics of kinetic asset exchange models satisfying \cref{def:wealthConserving,def:positivityPreserving,def:unbiased}. We can now formulate a so-called ``second law of econophysics.''

\begin{principle*}[Second Law of Thermodyanmics]
In an isolated physical system, Boltzmann entropy does not decrease.    
\end{principle*}

\begin{principle}
In an unbiased, wealth-conserving, positivity-preserving kinetic asset exchange, the Gini coefficient of economic inequality increases.
\end{principle}

\begin{principle*}[Second Law of Thermodynamics, alternative statement]
In an isolated physical system, heat flows from hot to cold and work must be performed to reverse the direction of the flow.
\end{principle*}

\begin{principle}
In an unbiased, wealth-conserving, positivity-preserving kinetic asset exchange, wealth flows from poor to rich and exogenous work would be necessary to reverse the direction of the flow.
\end{principle}

Given a Lyapunov functional for the entire class of systems, the natural question is: Are the evolutionary dynamics driven by the monotone functional as a gradient flow over an infinite-dimensional space? In other words, are these systems evolving to maximally increase economic inequality at each instant, in the way that $2$-Wasserstein theory shows for the Boltzmann entropy and the heat equation?

\subsection{An incompatibility result with the \texorpdfstring{$W_2$}{W2} gradient structure}\label{ssec:ysmNoGo}

The classical linear mobility 2-Wasserstein theory cannot describe, as gradient flows, some kinetic asset exchange models. While the nonlinear mobility theory could possibly handle such models, we argue in \cref{apdx:YSMW2mDiscussion} that it is not intuitively grounded for these particular models.

We show that there does not exist a $C^1$ functional whose $W_2$ gradient flow realizes a given evolution equation. The primary tool used to investigate the existence of such functionals is, in essence, a generalization of the potential condition for vector fields as a result of the fundamental theorem of calculus. See \cite{MR4331350, MR3014456, Vainberg1964}.

For a concrete example of a nonexistence result, consider the \textit{yard-sale model} \cite{MR3428664}, which is the simplest, fair transaction that does not send agents to negative wealth. In this model the random perturbation $\mathcal{Z}$ assigns equal probability mass to the outcomes $-1$ and $+1$, in which case $\phi(w,y) = (w\minop y)^2$. The evolution equation is \begin{equation}\label{eq:ysmPDE}
\ddt{\rho_t(w)} = \twoddw{}\left[\frac{\gamma}{2}\left(\int_{\mathbb{R}^+}\,dy\, (w\minop y)^2\rho_t(y)\right)\rho_t(w)\right].\end{equation}

\begin{proposition}\label{prop:nosmoothfuncYSM}
Let $\Omega = \mathbb{R}^+$. There is no $C^1$ $F:\ptwoac \rightarrow \mathbb{R}$ such that the yard-sale model of \cref{eq:ysmPDE} is realized as the gradient system $\left(\ptwoac, \mathbb{K}_\text{Otto}, F\right)$.
\end{proposition}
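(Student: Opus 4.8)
The plan is to argue by contradiction using the classical potential condition for gradient operators --- the infinite-dimensional form of the fact that a conservative vector field has vanishing curl \cite{Vainberg1964} --- and to exhibit at the end a concrete pair of mass-preserving perturbations at a fixed density for which the condition fails. \textbf{First}, I would unwind what the hypothesis asserts. With $\kotto(\rho)\xi = -\ddw{}\!\left(\rho\,\ddw{}\xi\right)$ (one spatial dimension, no-flux boundary conditions), the gradient system $\left(\ptwoacarg{\posreal},\kotto,F\right)$ is the flow $\ddt{\rho_t} = \ddw{}\!\left(\rho_t\,\ddw{}\frechet{F[\rho_t]}{\rho_t}\right)$. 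Equating this with the yard-sale equation \cref{eq:ysmPDE}, written in the form \cref{eq:wealthEvoEqSimple} as $\ddt{\rho_t} = \twoddw{}\!\left(\diff{w}{\rho_t}\rho_t\right)$ with $\phi(w,y) = (w\minop y)^2$, and integrating once in $w$ --- the constant of integration vanishes because both fluxes vanish at $w=0$ under the natural boundary conditions --- forces, on the interior $\{\rho>0\}$,
\[
\ddw{}\frechet{F[\rho]}{\rho}(w) \;=\; \frac{1}{\rho}\,\ddw{}\!\left(\diff{w}{\rho}\,\rho\right)\;=:\;u_\rho(w),
\qquad\text{hence}\qquad
\frechet{F[\rho]}{\rho}(w) \;=\; \Psi_\rho(w) + c(\rho),
\]
where $\Psi_\rho(w):=\int_{w_0}^{w} u_\rho(w')\,dw'$ is an explicit primitive determined by $\rho$ alone and $c(\rho)$ is independent of $w$. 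This step uses nothing about $F$ beyond its existence.

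\textbf{Next}, I would apply the potential condition. The admissible tangent directions to $\ptwoacarg{\posreal}$ at $\rho$ are the mean-zero functions (mass is conserved). If $F$ is $C^1$, then writing $D_\rho[\,\cdot\,](h)$ for the derivative at $\rho$ in the direction $h$, the second Gâteaux derivative of $F$ is symmetric:
\[
\int_{\posreal}\!\left(D_\rho\!\left[\frechet{F}{\rho}\right]\!(h_1)\right)h_2\,dw \;=\; \int_{\posreal}\!\left(D_\rho\!\left[\frechet{F}{\rho}\right]\!(h_2)\right)h_1\,dw
\]
for all mean-zero $h_1,h_2$; equivalently, on any $2$-parameter slice $\rho + t h_1 + s h_2$ the formula from the previous step already makes $F$ of class $C^2$, so Clairaut's theorem gives the identity and no regularity past $C^1$ is really used. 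Since $\int h_i = 0$, the constant $c(\rho)$ drops out, and the identity reduces to the $L^2$-symmetry of $h\mapsto D_\rho\Psi_\rho(h)$.

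\textbf{The heart of the argument} is to break this symmetry using the structure of the $(w\minop y)^2$ kernel. Write $\diff{w}{h}:=\frac{\gamma}{2}\int_{\posreal}(w\minop y)^2 h(y)\,dy$ for the linearization of the diffusion coefficient. The key elementary fact is one-sidedness: if $h$ is mean-zero and supported in an interval $(a_-,a_+)\subset\posreal$, then $\diff{w}{h}\equiv 0$ for $w\le a_-$, whereas $\diff{w}{h} = \frac{\gamma}{2}\int_{\posreal} y^2 h(y)\,dy$ --- a constant, generically nonzero --- for $w\ge a_+$, so a mean-zero wealth perturbation affects the diffusion coefficient only to the right of its support. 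I would take $h_1$ mean-zero, supported in $(a_-,a_+)$, with $\int_{\posreal} y^2 h_1(y)\,dy\ne 0$, and $h_2$ mean-zero, supported in $(b_-,b_+)$ with $b_->a_+$. Linearizing $u_\rho = \ddw{}\diff{w}{\rho} + \diff{w}{\rho}\,\ddw{}\log\rho$, integrating to obtain $D_\rho\Psi_\rho(\cdot)$, and using the one-sidedness (together with $\int h_i=0$) to annihilate every term except the last surviving one on each side, the two sides of the symmetry identity collapse to
\[
\frac{\gamma}{2}\left(\int_{\posreal} y^2 h_1(y)\,dy\right)\!\int_{\posreal}\log\rho(w)\,h_2(w)\,dw
\qquad\text{and}\qquad 0 ,
\]
respectively. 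It then suffices to choose $\rho$ and the bumps making the left quantity nonzero --- e.g.\ $\rho(w)=w e^{-w}\in\ptwoacarg{\posreal}$, with $h_1$ a difference of two narrow unit-mass bumps near $w=1,2$ and $h_2$ a difference of two narrow unit-mass bumps near $w=5,6$ --- giving the contradiction. Whichever sign is used for the gradient system only multiplies both sides by the same $\pm1$, so that is immaterial.

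\textbf{The hard part} will be the linearization bookkeeping in the last step: because $\kotto$ carries the linear mobility $\rho$, the primitive $\Psi_\rho$ contains $1/\rho$, and differentiating it in $\rho$ throws off an extra $-h\,\ddw{\rho}/\rho^{2}$ term; the computation must be organized so that the one-sidedness of $\diff{w}{h}$ makes all the ``local'' (overlapping-support) contributions cancel between the two sides, leaving only the manifestly asymmetric $\log\rho$ pairing. The remaining technical points --- vanishing of the integration constant, admissibility of $\rho+\varepsilon h_i\in\ptwoacarg{\posreal}$ for small $\varepsilon$, and well-definedness of $\Psi_\rho$ and its linearization on the bounded region carrying the test perturbations --- are routine for the explicit choices above. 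An equivalent packaging is to pull the $1$-form $h\mapsto\langle\Psi_\rho,h\rangle$ back to the finite-dimensional family $\{\rho+th_1+sh_2\}$ and check directly that its exterior derivative is nonzero, contradicting closedness of $dF$.
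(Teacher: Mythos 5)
Your proposal is correct, and it takes a genuinely different route from the paper. Both arguments rest on the same characterization of potential operators, but they exploit opposite halves of it: the paper uses the fundamental-theorem-of-calculus direction (its \cref{prop:frechetAnti}) to \emph{construct} the unique candidate potential $F$ by radial anti-differentiation of the operator $\alpha_2 + \diff{w}{\rho} + \int^w dy\,\diff{y}{\rho}\partial_y\log\rho$, and then verifies by a global computation that the Fr\'echet derivative of this candidate fails to reproduce the operator (along the way it must also argue $\alpha_1=0$ to avoid blow-up of the candidate). You instead use the symmetry of the second variation (Vainberg's criterion) and refute it by a purely \emph{local} test: two disjointly supported mean-zero bumps $h_1,h_2$ at a single explicit density. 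The decisive structural input in your version is the one-sidedness of the linearized diffusion coefficient, $\diff{w}{h}=0$ for $w$ to the left of $\mathrm{supp}\,h$ and $\diff{w}{h}=\tfrac{\gamma}{2}\int y^2h\,dy$ (constant) to the right, which I have checked does collapse the asymmetry defect to $\tfrac{\gamma}{2}\bigl(\int y^2h_1\bigr)\int h_2\log\rho\,dw$ versus $0$; with $\rho(w)=we^{-w}$ and your bump placements this is nonzero, so the contradiction is genuine, and your Clairaut argument correctly disposes of the $C^1$-versus-$C^2$ issue since the explicit primitive makes the restriction of $F$ to the two-parameter slice automatically $C^2$. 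What each approach buys: the paper's construction yields an explicit formula for the ``defect'' $\frechet{F_L}{\rho}-L$ and connects naturally to its follow-up question about nonlinear mobilities $m(\rho)$, while your argument is shorter, avoids the integration-constant and blow-up discussion entirely (the constants $c(\rho)$ and $-\diff{w_0}{\rho}$ are annihilated by pairing against mean-zero directions), and isolates the precise feature of the yard-sale kernel $(w\minop y)^2$ responsible for the failure of potentiality. The only remaining work is the bookkeeping you flag as the hard part, and it goes through as you describe: the $\diff{w'}{\rho}\,\partial_{w'}(h/\rho)$ terms vanish on both sides of the symmetry identity by support separation and mean-zero pairing, leaving only the manifestly one-sided $\log\rho$ contribution.
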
 The proof is given in \cref{apdx:noGoProof}. 


Since \cref{prop:nosmoothfuncYSM} shows that the classical $W_2$ theory is insufficient at least for the yard-sale model then it is reasonable to think that the next attempt at a gradient flow formulation should be with the nonlinear mobility theory. \Cref{apdx:YSMW2mDiscussion} discusses our reasoning for not dwelling on $W_{2,m}$ while in search of a sufficient metric geometry and serves as a guide to the thought process that led to the differential structure presented in \cref{sec:novelMetric}. The arguments therein about conserved quantities and extremization of functionals make use of several simplifying assumptions. Therefore we do not preclude other gradient flow formulations of phenomena with conserved quantities, see for example \cite{MR4746872, MR4179806}, rather the discussion gives a hint at the sufficiency of the structure while proffering an intuitive explanation about why $2-$Wasserstein geometry did not work, in this case.

We suggest that a fourth-order Onsager operator could sufficiently accommodate the dynamics that we seek to pose as gradient systems, rather than the second-order Otto Onsager operator, $\kotto$. 

\section{An adapted Wasserstein metric tensor}\label{sec:novelMetric}

We now describe the formal metric structure on an infinite-dimensional space that enables the map \begin{equation}\label{eq:w22dGradMap}\frechet{F}{\rho} \mapsto \twoddx{}\left[\varfunc{x}{\rho}{H}\twoddx{}\frechet{F}{\rho}\right],\end{equation} to be realized as given by the associated Onsager operator. This metric structure will allow the kinetic asset exchange models of \cref{sec:KAEM} to be expressed as gradient flows of the scaled Gini functional.

Let $\Omega \subset \mathbb{R}^n$ be a bounded, convex domain with smooth boundary, $\partial \Omega$, and \begin{equation}\label{eq:diffInw22D}\varfunc{x}{\rho}{D} = \int_{\Omega}\,dy\, W(x,y)\rho(y),\end{equation} where $W:\Omega \times \Omega \rightarrow \mathbb{R}^+$ is symmetric and $\mathcal{L}^n$-a.e. positive. Let $T_\rho \cdspace \subset \left\{ h\in\ltwo{}(\Omega)\,:\, h \perp_{L_2} \text{ker}(\Delta)\right\}$; this choice of tangent space is explored more in \cref{sec:idenDiffCDspaces}.

\begin{definition}\label{def:cdMetricTensor}
    At each $\rho\in\pac{\Omega}$, the metric tensor $\langle\,,\,\rangle_{\rho,\cdspace}:T_\rho \cdspace \times T_\rho \cdspace \rightarrow \mathbb{R}$ is defined for $h_1,h_2 \in T_\rho \cdspace$ as \begin{equation}\label{eq:w22Dtensor}
    \begin{split}
\langle h_1,h_2\rangle_{\rho,\cdspace} = &\int_\Omega\,dx\,\varfunc{x}{\rho}{D}\Delta\psi_1\Delta\psi_2\\
&\text{where for } i =1,2 \, \begin{cases}
\Delta(\varfunc{x}{\rho}{D}\Delta\psi_i)=h_i &\text{in } \Omega\\
\Delta \psi_i = 0 = \ddNorm{\Delta\psi_i}  &\text{on } \partial\Omega.
\end{cases}
\end{split}
\end{equation}
\end{definition}

Whereas the the metric tensor of $W_2$ involves second-order elliptic differential equations, the defining PDE for $\langle\,,\rangle_{\rho,\cdspace}$ are of fourth order. The state-dependent $\varfunc{x}{\rho}{D}$ term plays a similar role to the nonlinear mobility in the Riemannian structure of $W_{2,m}$.

\begin{definition}[$\cdspace$ space]
    Let $\cdspace(\Omega) = \{\pac{\Omega}, \langle\,,\,\rangle_{\rho,\cdspace} \}$. 
\end{definition}

We now turn to identifying for the $\cdspace$ spaces what is called the Otto calculus for $W_2,$ namely the first-order functional calculus.

\begin{proposition}[The first-order calculus of $\cdspace(\Omega)$]\label{prop:cdspacecalc}
    For $H:\pac{\Omega} \rightarrow \mathbb{R}$, \begin{equation}\label{eq:w22dgrad}\grad{\cdspace}{H[\rho]} = \wgenMobgradN{H[\rho]}{\varfunc{x}{\rho}{D}}\end{equation} or in one dimension
\begin{equation}\label{eq:w22dgrad1d}\grad{\cdspace}{H[\rho]} = \wgenMobgrad{H[\rho]}{\varfunc{x}{\rho}{D}}{x}.\end{equation}
\end{proposition}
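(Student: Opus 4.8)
The plan is to work directly from the definition of the gradient via the canonical isomorphism, \cref{eq:gradViaCanonicalIsom}, specialized to $\mf = \cdspace(\Omega)$. That is, I want to find the element $\grad{\cdspace}{H[\rho]} \in T_\rho\cdspace$ satisfying
\[
\langle \grad{\cdspace}{H[\rho]}, h\rangle_{\rho,\cdspace} = \left\langle DH(\rho), h\right\rangle = \int_\Omega\,dx\,\frechet{H}{\rho}\,h
\]
for every tangent vector $h \in T_\rho\cdspace$, where the last equality is just the definition of the Fr\'echet derivative as the first $\ltwo{}$ variation. The candidate is $g := \wgenMobgradN{H[\rho]}{\varfunc{x}{\rho}{D}}$, and I need to verify it does the job.

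First I would introduce, for the test direction $h \in T_\rho\cdspace$, its potential $\psi_h$ solving the fourth-order problem from \cref{def:cdMetricTensor}, namely $\Delta(\varfunc{x}{\rho}{D}\Delta\psi_h) = h$ in $\Omega$ with $\Delta\psi_h = 0 = \ddNorm{\Delta\psi_h}$ on $\partial\Omega$. Then by \cref{eq:w22Dtensor},
\[
\langle g, h\rangle_{\rho,\cdspace} = \int_\Omega\,dx\,\varfunc{x}{\rho}{D}\,\Delta\psi_g\,\Delta\psi_h,
\]
where $\psi_g$ is the potential associated to $g$. The key observation is that the potential associated to $g = \wgenMobgradN{H[\rho]}{\varfunc{x}{\rho}{D}}$ is simply $\psi_g = \frechet{H}{\rho}$ itself — provided $\frechet{H}{\rho}$ satisfies the prescribed homogeneous boundary conditions — since plugging $\psi_g = \frechet{H}{\rho}$ into $\Delta(\varfunc{x}{\rho}{D}\Delta\psi_g)$ reproduces $g$ by construction. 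Substituting,
\[
\langle g, h\rangle_{\rho,\cdspace} = \int_\Omega\,dx\,\varfunc{x}{\rho}{D}\,\Delta\!\left(\frechet{H}{\rho}\right)\Delta\psi_h.
\]
Now I would integrate by parts twice, moving the two Laplacians off $\psi_h$ and onto the product $\varfunc{x}{\rho}{D}\,\Delta(\frechet{H}{\rho})$; the boundary terms vanish because $\Delta\psi_h = 0 = \ddNorm{\Delta\psi_h}$ on $\partial\Omega$ (these are exactly the two conditions needed to kill both boundary contributions of a double integration by parts of $\int \varphi\,\Delta(\cdot)$ type). This yields
\[
\langle g, h\rangle_{\rho,\cdspace} = \int_\Omega\,dx\,\Delta\!\left(\varfunc{x}{\rho}{D}\,\Delta\frechet{H}{\rho}\right)\psi_h = \int_\Omega\,dx\,g\,\psi_h,
\]
and then a final, symmetric manipulation — recognizing $g = \Delta(\varfunc{x}{\rho}{D}\Delta\psi_h$-equation's right side)$\dots$ actually more directly: since $h = \Delta(\varfunc{x}{\rho}{D}\Delta\psi_h)$, integrating by parts twice the other way gives $\int_\Omega g\,\psi_h = \int_\Omega \Delta(\varfunc{x}{\rho}{D}\Delta\frechet{H}{\rho})\,\psi_h = \int_\Omega \varfunc{x}{\rho}{D}\,\Delta\frechet{H}{\rho}\,\Delta\psi_h$, which by the same trick equals $\int_\Omega \frechet{H}{\rho}\,\Delta(\varfunc{x}{\rho}{D}\Delta\psi_h) = \int_\Omega \frechet{H}{\rho}\,h$. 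This is precisely $\langle DH(\rho),h\rangle$, so by uniqueness of the Riesz-type representative in the metric $\langle\,,\,\rangle_{\rho,\cdspace}$ we conclude $\grad{\cdspace}{H[\rho]} = g$. The one-dimensional formula \cref{eq:w22dgrad1d} is just the specialization $\Delta = \partial_x^2$ on $\Omega \subset \real$.

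The main obstacle — and the place where this is genuinely only a formal computation, consistent with the paper's stated stance — is the well-posedness and boundary-compatibility of the potential problem: one needs $\frechet{H}{\rho}$ to lie in the appropriate function class (satisfying $\Delta\frechet{H}{\rho} = 0 = \ddNorm{\Delta\frechet{H}{\rho}}$ on $\partial\Omega$, or whatever the correct space $\cdspacearg{D}$ demands) for the identification $\psi_g = \frechet{H}{\rho}$ to be legitimate, and one needs the fourth-order problem defining tangent potentials to be uniquely solvable so that the metric pairing actually determines the gradient. I would flag these points explicitly rather than resolve them, noting that the boundary conditions built into \cref{def:cdMetricTensor} are exactly the ones that make the double integration by parts clean, and that a rigorous treatment belongs to the function-space analysis deferred to \cref{sec:idenDiffCDspaces}. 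Everything else is the standard Otto-calculus bookkeeping, now with a biharmonic-type operator in place of the divergence-form elliptic operator of $W_2$.
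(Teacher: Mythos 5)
Your proposal is correct and follows essentially the same route as the paper's proof: both rest on the defining identity $\langle \grad{\cdspace}{H},h\rangle_{\rho,\cdspace}=\int_\Omega \frechet{H}{\rho}\,h$, the observation that the potential of the candidate $\wgenMobgradN{H}{\varfunc{x}{\rho}{D}}$ is $\frechet{H}{\rho}$ itself, and a double application of Green's identities killed by the boundary conditions $\Delta\psi=0=\ddNorm{\Delta\psi}$ (the paper derives the gradient by testing against arbitrary $\psi_2$, whereas you verify the candidate, but the computation is identical). Your explicit flagging of the implicit boundary compatibility of $\varfunc{x}{\rho}{D}\Delta\frechet{H}{\rho}$ is apt, since the paper absorbs that into the same formal Green's-identity step.
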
 
\begin{proof}
Consider a sufficiently smooth functional $H:\pac{\Omega} \rightarrow \mathbb{R}$. Let \begin{equation*}
    \begin{cases}
\Delta(\varfunc{x}{\rho}{D}\Delta\psi_1)=\grad{\cdspace}{H[\rho_0]} &\text{in } \Omega\\
\Delta(\varfunc{x}{\rho}{D}\Delta\psi_2)=\ddt{\rho_t}\big|_{t=0} &\text{in } \Omega\\
\Delta\psi_i=0=\ddNorm{\Delta\psi_i} &\text{ for i = 1,2 on } \partial\Omega.
\end{cases}
\end{equation*}

For any smooth curve $\rho_t:(-\epsilon, +\epsilon)\rightarrow\pac{\Omega}$, we have that \begin{equation}\label{eq:w22Drateofchange}
    \left\langle\grad{\cdspace}{H[\rho_0]},\ddt{\rho_t}\bigg|_{t=0}\right\rangle_{\rho_0,\cdspace} = \totdd{H[\rho_t]}{t}\bigg|_{t=0} = \left\langle\frechet{H[\rho_0]}{\rho},\ddt{\rho_t}\bigg|_{t=0}\right\rangle_{L^2}.
\end{equation}

By applying Green's identities and the homogeneous Neumann boundary conditions on $\psi_i$, the l.h.s. of \cref{eq:w22Drateofchange} becomes \begin{align}
    \int_\Omega\,dx\,\varfunc{x}{\rho_0}{D} \Delta \psi_1 \Delta \psi_2 &= \int_\Omega\,dx\, \Delta\left(\varfunc{x}{\rho_0}{D} \Delta \psi_1\right)\psi_2 \nonumber \\
    &= \int_\Omega\,dx\, \grad{\cdspace}{H[\rho_0]}\psi_2 ,\label{eq:w22drateofchangelhs}
\end{align} whereas upon inserting the definition of $\psi_2$ into the r.h.s. of \cref{eq:w22Drateofchange}
\begin{align}
    \int_\Omega\,dx\,\frechet{H[\rho_0]}{\rho}\ddt{\rho_t}\bigg|_{t=0} &= \int_\Omega\,dx\,\frechet{H[\rho_0]}{\rho}\Delta \left(\varfunc{x}{\rho_0}{D}\Delta \psi_2\right) \nonumber \\
    &= \int_\Omega\,dx\, \Delta\left(\varfunc{x}{\rho_0}{D}\Delta \frechet{H[\rho_0]}{\rho}\right)\psi_2. \label{eq:w22drateofchangerhs}
\end{align}

Since \cref{eq:w22drateofchangelhs,eq:w22drateofchangerhs} are equal for all $\psi_2$, we have the conclusion that \begin{equation}\grad{\cdspace}{H[\rho]} = \wgenMobgradN{H[\rho]}{\varfunc{x}{\rho}{D}}.\end{equation}
\end{proof}

Thus the state-dependent Onsager operator is \begin{equation}
    \mk_{\cdspace}[\cdot] = \Delta\left(\varfunc{x}{\rho}{D}\Delta[\cdot]\right),
\end{equation} which allows that gradient of a functional $H$ to be expressed as \begin{equation}
    \grad{\cdspace}{H[\rho]} = \mk_{\cdspace}\circ\frechet{H}{\rho}.
\end{equation}

In \cref{sec:idenDiffCDspaces}, we discuss how solvability conditions for the elliptic PDE in the metric tensor of \cref{def:cdMetricTensor} modify the tangent space by incorporating constraints on the admissible directions of motion.

\subsection{Gradient flow formulation of kinetic asset exchange}\label{ssec:gradFlowKaem}

We now prove that kinetic asset exchange models satisfying  \cref{def:wealthConserving,def:positivityPreserving,def:unbiased} can be expressed as the gradient flow of the scaled Gini functional on the space of wealth distributions endowed with a metric tensor that depends on the specific binary transaction. The kinetics, which come entirely from the prescribed binary transaction of \cref{eq:microtransaction}, are incorporated into the Riemannian structure, $\langle\,,\,\rangle_{\rho,\cdspace},$ and are separated from the energetic functional, which is the scaled Gini functional, $G,$ \textit{across all of the models}.

Therefore the diffusion approximations of the class of models satisfying \cref{def:wealthConserving,def:positivityPreserving,def:unbiased} may all be viewed through a unified variational lens: Each evolves as a curve of steepest ascent of economic inequality in a space of wealth distributions whose geometry is determined by the specifics of the collisional microscopic transactions.

\begin{theorem}\label{thm:gradFlowEconophysics}
Consider a kinetic asset exchange model satisfying \cref{def:wealthConserving,def:positivityPreserving,def:unbiased}.
Define $\gamma \phi(w,y)$ to be the time-infinitesimal variance for a transaction between agents with wealths $(w,y)$. Let the diffusion coefficient $\varfunc{w}{\rho}{D} = (\gamma/2){\mathbb{E}}_{\rho(y)}[\phi(w,y)]$ be half the expected time-infinitesimal variance in the mean-field/diffusion approximation.  Then the evolution equation for the probability density of agents in wealth-space \[\ddt{\rho_t(w)} = \twoddw{}\left[\frac{\gamma}{2}\left(\int_{\mathbb{R}^+}\,dy\, \phi(w,y)\rho_t(y)\right)\rho_t(w)\right]\] is equivalent to the evolution generated by the $\cdspace$ gradient system \[\ddt{\rho_t(w)} = +\grad{\cdspace}{G[\rho_t]},\] where the $G$ is the scaled Gini coefficient, \[G[\rho] = -\frac{1}{2}\int_{\mathbb{R}^+}\,dw\,\int_{\mathbb{R}^+}\,dy (w\minop y)\rho(w)\rho(y).\]
\end{theorem}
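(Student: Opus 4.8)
The plan is to obtain the theorem almost for free by combining two facts already in hand: \cref{lem:GiniIden}, which gives $\twoddw{}\frechet{G[\rho]}{\rho} = \rho(w)$, and \cref{prop:cdspacecalc}, the first-order calculus of the $\cdspace$ geometry. First I would make the identification $W(x,y) = \tfrac{\gamma}{2}\phi(x,y)$ so that the diffusion coefficient $\varfunc{x}{\rho}{D}$ appearing in \cref{def:cdMetricTensor} and the quantity $\diff{x}{\rho}$ of \cref{eq:wealthDiffusionCoeff} are literally the same object; one checks that this $W$ is symmetric (since $\mathcal{W}$, hence $\phi$, is symmetric in its arguments) and $\mathcal{L}^1$-a.e.\ nonnegative, as the general construction requires. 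Then, applying \cref{prop:cdspacecalc} in one spatial dimension with $H = G$ and collapsing $\twoddw{}\frechet{G[\rho]}{\rho}$ to $\rho$ via \cref{lem:GiniIden} gives
\[
\grad{\cdspace}{G[\rho]} \;=\; \twoddw{}\Bigl(\varfunc{w}{\rho}{D}\,\twoddw{}\frechet{G[\rho]}{\rho}\Bigr) \;=\; \twoddw{}\Bigl(\diff{w}{\rho}\,\rho(w)\Bigr),
\]
whose right-hand side is exactly the right-hand side of \cref{eq:wealthEvolutionEquation}. Hence $\ddt{\rho_t} = +\grad{\cdspace}{G[\rho_t]}$, with the plus sign matching the gradient-ascent (inequality-increasing) picture of \cref{prop:giniMonotoneEconophysics}.

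In more detail, the steps in order are: (i) rewrite the McKean--Vlasov equation in the already-derived form \cref{eq:kaemDiffGini}, $\ddt{\rho_t} = \twoddw{}[\diff{w}{\rho_t}\,\twoddw{}\frechet{G[\rho_t]}{\rho_t}]$; (ii) verify that $W = \tfrac{\gamma}{2}\phi$ satisfies the hypotheses on the kernel in \cref{def:cdMetricTensor}, noting that $\phi(x,y)\to 0$ as $x$ or $y\to 0$ is consistent with the natural boundary condition $\rho_t(0)=0$ and with a.e.-positivity of $\diff{\cdot}{\rho}$ on the interior; (iii) evaluate $\grad{\cdspace}{G[\rho]}$ by \cref{prop:cdspacecalc} and simplify using \cref{lem:GiniIden}; (iv) observe the two right-hand sides coincide, so the PDE flow and the $\cdspace$ gradient flow generate the same evolution; and (v) record that the gradient-flow right-hand side is twice a $w$-derivative, so its zeroth and first moments vanish under the natural boundary conditions, i.e.\ $\ddt{\rho_t}$ is a legitimate tangent vector for the geometry, consistent with wealth conservation being built into $T_\rho\cdspace$.

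The main obstacle is not algebraic but structural: \cref{def:cdMetricTensor} and \cref{prop:cdspacecalc} were stated for a bounded convex $\Omega$ with smooth boundary, whereas the kinetic asset exchange models live on $\Omega = \posreal$. I would therefore have to explain in what sense the fourth-order elliptic problem defining the metric tensor is solvable on the half-line, how the homogeneous conditions $\Delta\psi_i = 0 = \ddNorm{\Delta\psi_i}$ together with the decay $\rho_t(w)\to 0$ as $w\to\infty$ should be interpreted, and why the degeneracy of $\diff{w}{\rho}$ at $w=0$ does not destroy positivity of the metric on the relevant tangent space. Consistent with the paper's stated intent, I would treat these as formal points to flag rather than to resolve --- the substance of the proof is the two-line computation above, and the function-space subtleties are deferred.
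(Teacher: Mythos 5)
Your proposal is correct and follows essentially the same route as the paper: the proof there is exactly the two-line computation of substituting $\rho = \twoddw{}\frechet{G[\rho]}{\rho}$ via \cref{lem:GiniIden} into the evolution equation and then invoking \cref{prop:cdspacecalc} to recognize the result as $+\grad{\cdspace}{G[\rho_t]}$. Your additional flags --- the identification $W=\tfrac{\gamma}{2}\phi$, the half-line domain versus the bounded $\Omega$ of \cref{def:cdMetricTensor}, and the degeneracy of $\diff{w}{\rho}$ at $w=0$ --- are sensible caveats consistent with the formal spirit the paper itself adopts.
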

\begin{proof}
Restrict to $M_1 = \{\mu \in \pac{\posreal}\,:\, \int d\mu \, x = 1.\}$ Using the expression for the diffusion coefficient, write the dynamics as \begin{align*}
\ddt{\rho_t} &= \twoddw{}\left[\varfunc{w}{\rho_t}{D}\rho_t\right] \\
&= \twoddw{}\left[\varfunc{w}{\rho_t}{D}\twoddw{}\frechet{G[\rho_t]}{\rho_t}\right] &&  \text{by \cref{lem:GiniIden}}\\
&= + \text{grad}_{\cdspace}G[\rho_t] && \text{by \cref{prop:cdspacecalc}} .
\end{align*}As we sought, the gradient system $(\cdspace, G)$ yields the time-evolution.
\end{proof}

Entropy was known to increase under the heat equation dynamics, but it was not until the seminal work of Jordan, Kinderlehrer and Otto \cite{MR1617171, MR1842429} using the novel $W_2$ metric structure that the heat equation could be viewed as a gradient flow of Boltzmann entropy.

Likewise, the Gini coefficient of economic inequality has been known to increase under many different econophysics models but it is only now with the metric structure of $\cdspace$, which has cleanly disentangled the kinetics and energetics, that the natural gradient structure of these econophysics models is evident. This is what we mean by an \textit{intuitive} gradient flow.

A rich thermodynamics interpretation is made available from the work of L. Onsager \cite{MR57765} and its interplay with Wasserstein theory \cite{MR2776123,MR3150642,mielke2023introduction}. Using this formulation, the following characterization is available for the evolutionary dynamics of wealth distributions:\begin{itemize}
    \item The (scaled) Gini coefficient of economic inequality is the free-energy functional of each system.
    \item The differential of $G$ with respect to the particle number (namely, the agent mass density), $DG \equiv \frechet{G}{\rho}$, is the chemical potential (the thermodynamical driving force), $\xi$.
    \item The Onsager operator, $\mk_{\cdspace}$ -- the only object that reflects the specific kinetics of a particular model -- couples the driving force, $\xi$, to the flux of agent mass in wealth space $\partial_t\rho_t$ by way of the gradient flow theorem.
\end{itemize}

\begin{remark}
    The evolution equations of \cref{thm:gradFlowEconophysics} preserve convex order; that is, $\totdd{}{t}\int_\posreal\,dw\,\phi(w)\rho_t(w) \geq 0$ for convex $\phi$. While this structure aligns with the martingale optimal transport formulation of \cite{MR4003563}, the present framework is designed to accommodate future work incorporating drift terms (for example, those in \cite{MR3872473}) that may mitigate the monotone increase of economic inequality. Since these terms typically disrupt the preservation of convex order, such a system moves beyond the strict scope of martingale optimal transport.
\end{remark}

\subsection{Conserved quantities of \texorpdfstring{$\cdspace$}{C D} gradient flows}\label{ssec:cdGradFlowConsrvQuant}

Gradient flows in $\cdspace$ naturally incorporate conserved quantities. \Cref{apdx:YSMW2mDiscussion} discusses how the conserved quantities for the wealth distribution evolution equations motivated the $\cdspace$ metric tensor.


\begin{lemma}\label{lem:ConservedQuant}
    Let $F: \pac{\Omega} \rightarrow \mathbb{R}$ be a smooth functional for which \begin{equation}
        \varfunc{x}{\rho}{D}\Delta\frechet{F}{\rho} \quad \text{and} \quad \ddNorm{}\left[\varfunc{x}{\rho}{D}\Delta\frechet{F}{\rho}\right] \nonumber
    \end{equation} vanish on $\partial\Omega$, where $\ddNorm{}$ is the normal derivative on the boundary. If $\eta: \Omega \rightarrow \mathbb{R}$ is harmonic in $\Omega$ then \[\int_\Omega \,dx\, \eta(x)\rho_t(x)\] is constant in time under the dynamics of the gradient flow \[\ddt{\rho_t} = \text{grad}_{\cdspace}F[\rho_t].\]
\end{lemma}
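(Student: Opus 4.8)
The plan is to compute $\frac{d}{dt}\int_\Omega \eta(x)\rho_t(x)\,dx$ directly, substitute the gradient-flow dynamics $\partial_t\rho_t = \grad{\cdspace}{F[\rho_t]} = \Delta(\varfunc{x}{\rho}{D}\Delta\frechet{F}{\rho})$ from \cref{prop:cdspacecalc}, and integrate by parts twice to move both Laplacians off the $D\Delta(\delta F/\delta\rho)$ factor and onto $\eta$. Writing $u := \varfunc{x}{\rho_t}{D}\Delta\frechet{F}{\rho_t}$ for brevity, the claim reduces to showing
\[
\frac{d}{dt}\int_\Omega \eta\,\rho_t\,dx = \int_\Omega \eta\,\Delta u\,dx = 0.
\]
First I would apply Green's second identity to $\int_\Omega \eta\,\Delta u$, which produces $\int_\Omega u\,\Delta\eta\,dx$ plus boundary terms of the form $\int_{\partial\Omega}\big(\eta\,\ddNorm{u} - u\,\ddNorm{\eta}\big)\,dS$. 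The bulk term $\int_\Omega u\,\Delta\eta\,dx$ vanishes immediately because $\eta$ is harmonic, i.e. $\Delta\eta \equiv 0$ in $\Omega$. So everything comes down to the boundary terms.

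For the boundary, I would invoke the two hypotheses on $F$: the quantity $u = \varfunc{x}{\rho}{D}\Delta\frechet{F}{\rho}$ and its normal derivative $\ddNorm{u} = \ddNorm{}[\varfunc{x}{\rho}{D}\Delta\frechet{F}{\rho}]$ are both assumed to vanish on $\partial\Omega$. Hence $\int_{\partial\Omega}\eta\,\ddNorm{u}\,dS = 0$ and $\int_{\partial\Omega}u\,\ddNorm{\eta}\,dS = 0$ regardless of the behavior of $\eta$ or $\ddNorm{\eta}$ on the boundary. This disposes of all boundary contributions, giving $\frac{d}{dt}\int_\Omega\eta\rho_t = 0$, which is the conservation statement. (In one dimension this is exactly the two-integration-by-parts argument already used in the proof of \cref{prop:giniMonotoneEconophysics}, with the test function $\eta$ replacing $\frechet{G}{\rho}$; the harmonicity condition $\eta'' = 0$ there becomes "$\eta$ is affine," which is precisely why mass ($\eta \equiv 1$) and first moment ($\eta(x) = x$) are conserved for the kinetic asset exchange models — this should be remarked on afterward.)

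The main obstacle is purely a matter of care rather than depth: making sure the integration by parts / Green's identity is applied in a form where the boundary terms are exactly the two expressions that the hypotheses are tailored to kill, and confirming there are no hidden extra boundary terms (e.g. from differentiating under the integral sign in $\frac{d}{dt}$, or from the regularity one is tacitly assuming for $\rho_t$, $\varfunc{x}{\rho}{D}$, and $\frechet{F}{\rho}$). Since the paper is explicitly working at a formal level, I would simply state that all manipulations are justified under sufficient smoothness and decay, exchange the time derivative with the spatial integral without comment, and present the two-step Green's-identity computation as the body of the proof.
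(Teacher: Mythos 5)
Your proposal is correct and matches the paper's proof, which simply substitutes the gradient-flow dynamics and integrates by parts; your version spells out the Green's-identity step, the harmonicity of $\eta$ killing the bulk term, and the two boundary hypotheses killing the surface terms, all exactly as intended.
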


\begin{proof}
    Simply note that \[\totdd{}{t} \int_\Omega\,dx\,\eta(x)\rho_t(x) = \int_\Omega\,dx\,\eta(x) \left(\Delta\left(\varfunc{x}{\rho_t}{D}\Delta \frechet{F}{\rho}\right)\right)\] and integrate by parts.
\end{proof}

\begin{remark}[Dissipative invariants]
    In a Hamiltonian system with Poisson bracket $\{\,,\,\}$, observable $a$, and Hamiltonian $H$, the evolution is generated by $\dot{a} = \{a,H\}$. The \textit{Casimir invariants} of a Poisson bracket are observables whose bracket with any observable vanish and are therefore constant in time \cite{PhysRevA.33.4205}.
    
    In \cite{MR1627532}, P.J.~Morrison states Casimir invariants ``are built into the phase space and are in this sense kinematic constants of motion.''  So too are the \textit{dissipative invariants} (a term introduced in \cite{MR735549}) of \cref{lem:ConservedQuant}, which are a consequence of the geometry imposed on the space by the metric tensor.

    The gradient structure above is a dissipative bracket $[\, , \, ]$ in the sense of \cite{MR735549} under the identification \begin{equation}
        \grad{\cdspace}{F[\rho_t]} = \left[\rho_t,F\right]_{\cdspace}\nonumber.
    \end{equation}

    The notion of a dissipative Hamiltonian system with both a Poisson and dissipative bracket along with a Hamiltonian and free-energy functional has been codified into the GENERIC formulation \cite{HCOthermodynamics}.
\end{remark}

We will return to discussing these conservation laws in \cref{sec:idenDiffCDspaces}.

\section{The case of \texorpdfstring{$\varfunc{x}{\rho}{D} = \rho$}{D[x,p]=p}}\label{sec:idenDiffCDspaces}

When $\varfunc{x}{\rho}{D} = \rho$, the corresponding metric structure mirrors that of linear mobility, i.e. $m(\rho) = \rho,$ in the classical $W_2$ theory. At each $\rho\in\pac{\Omega}$ in $\cdspacearg{\rho}$, the metric tensor $\langle\,,\,\rangle_{\rho,\cdspacearg{\rho}}:T_\rho\cdspacearg{\rho}\times T_\rho \cdspacearg{\rho} \rightarrow \mathbb{R}$ is defined for $h_1,h_2 \in T_\rho W_\cdspacearg{\rho}$ as \begin{equation}\label{eq:w22tensor}
\begin{split}
\langle h_1,h_2\rangle_{\rho,\cdspacearg{\rho}} = &\int_\Omega\,dx\,\rho\Delta\psi_1\Delta\psi_2\\
&\text{where for } i =1,2 \, \begin{cases}
\Delta(\rho\Delta\psi_i)=h_i &\text{in } \Omega\\
\Delta \psi_i = 0 = \ddNorm{\Delta\psi_i}  &\text{on } \partial\Omega.
\end{cases}
\end{split}
\end{equation} For the $\psi_i$ coupled to the $h_i$ via the $\rho$-weighted biharmonic equation with homogeneous Neumann boundary conditions, we have that\footnote{Recall our convention that $\mu = \rho \mathcal{L}^n.$} \[\langle h_1,h_2\rangle_{\rho,\cdspacearg{\rho}} = \langle \Delta\psi_1,\Delta\psi_2\rangle_{\ltwo{\mu}}.\]

In this section, we investigate the consequences of expressing the metric as a time-integrated $\rho$-dependent version of a dual norm on a space that arises in the study of the $\rho$-weighted biharmonic equations with homogeneous Neumann boundary conditions. Here we essentially study the adapted Benamou-Brenier formula.

We mention how first-order negative Sobolev norms appear in $W_2$ then introduce the spaces $\htildetwo{\mu}$ and $\htildetwoneg{\mu}$ and express the metric of $\cdspacearg{\rho}$ in terms of the latter's norm. 

The one-dimensional case is rigorously analyzed in \cite{MR4961432,MR4987462}, including an application to the Derrida-Lebowitz- Speer-Spohn (DLSS) equation, and discussed as a weakening of the dynamic formulation of the \textit{martingale optimal transport problem} from \cite{MR4003563}.

\subsection{\texorpdfstring{$W_2$}{W2} and negative homogeneous Sobolev norms}
The $W_2(\Omega)$ metric has a number of connections to negative, first-order homogeneous Sobolev norms. In particular, the metric can be expressed as the action minimizing \[W_2^2(\mu,\nu) = \inf_{\text{curves }\mu_t}\left\{\int_0^1\,dt\,||\dot{\mu}_t||_{\dot{H}^{-1}_{\mu_t}}^2\,:\,\mu_0 = \mu, \mu_1 = \nu\right\}\] or equivalently the length minimizing\[W_2(\mu,\nu) = \inf_{\text{curves }\mu_t}\left\{\int_0^1\,dt\,||\dot{\mu}_t||_{\dot{H}^{-1}_{\mu_t}}\,:\,\mu_0 = \mu, \mu_1 = \nu\right\},\] where \[||f||_{\dot{H}^{-1}_{\mu}} := \int_\Omega\,d\mu\,|\nabla \psi_f|^2\] and $\psi_f$ is defined by the elliptic equation \begin{equation}\label{eq:muWeightedPoisson}\begin{cases}
\divg \left(\rho \nabla \psi_f\right) = f &\text{in } \Omega\\
\ddNorm{\psi_f} = 0 &\text{ on } \partial\Omega.
\end{cases}\end{equation}

The first-order weighted homogeneous Sobolev spaces $\hdotone{\mu}(\Omega)$ and $\hdotoneneg{\mu}(\Omega)$ arise naturally in the study of \cref{eq:muWeightedPoisson}, a $\mu$-weighted Poisson problem with Neumann boundary conditions. A classical result is that there exists a solution\footnote{Any solution, $u(x)$, is not unique as any scalar offset, $u(x)+c$, still satisfies the equation.} to \cref{eq:muWeightedPoisson} for $f\in L^2$ if and only if \[\int_\Omega\,dx\, f = 0.\] Of course the mean-zero condition holds for the data $\dot{\mu_t}$ since $\mu_t$ is a probability measure for all $t\in (0,1)$. The mean-zero condition on $f$ can be rephrased as $f\perp_{L^2} \ker(\nabla)$. The space $\dot{H}^1_\mu(\Omega)$, in which uniqueness to \cref{eq:muWeightedPoisson} is best studied, is defined as the factor space of $H^1_\mu(\Omega) = \left\{u \in L^2_\mu(\Omega) \,:\, D^\alpha u \in L^2_\mu(\Omega) \,\forall\, |\alpha| = 1\right\}$ under the equivalence relation $u\sim v$ if $u-v \in \ker(\nabla).$

A number of transport inequalities \cite{MR2895086} and asymptotic comparisons make use of the $\hdotone{\mu}$ and $\hdotoneneg{\mu}$ structure. For example, see \cite{MR3922440}, \cite[Section~5.5.2]{MR3409718}, \cite[Theorem~7.2.6]{MR1964483}, and \cite[Exercise~22.20]{MR2459454}.

\subsection{\texorpdfstring{$\mu$}{u}-weighted biharmonic equation with Neumann boundary conditions}
We turn our attention to the elliptic equations for $\psi_i$ in \cref{eq:w22tensor}. 

Let $\Omega\subset \mathbb{R}^n$ be an open, convex, bounded domain with smooth boundary (assume that $\partial \Omega$ is at least $C^4$). Consider the fourth-order PDE \begin{equation}\label{eq:biharmonicNeumann}\begin{cases} \Delta \left(\rho \Delta u\right) = f \quad \text{in } \Omega \\ \frac{\partial \Delta u}{\partial \nu} = 0 = \Delta u \quad \text{on } \partial\Omega, \end{cases}\end{equation} where $\rho:\Omega\rightarrow \mathbb{R}^+$ is the density of a positive Borel measure on $\Omega$ that is absolutely continuous with respect to the Lebesgue measure on $\Omega$ and $f\in L^2(\Omega)$. We seek to understand the existence and uniqueness of solutions to this fourth-order PDE with Neumann boundary conditions.

The correct space to study the biharmonic equation with homogeneous Dirichlet boundary conditions is the trace-zero, second-order Sobolev space $H^2_0(\Omega)$ for which the existence and uniqueness results are easily seen. In fact the dual norm for $H^2_0(\Omega)$ looks promising, i.e. an inner product structure like $(\Delta u, \Delta v)_{L^2}$, except that it encodes the wrong boundary conditions. We make use of results from \cite{valli2023wellposed} with reference to \cite{MR2597943, MR2667016, MR2777530}

The $\mu$-weighted space $L^2_\mu(\Omega)$ has norm \[||f||_{\ltwo{\mu}}^2=\int_\Omega\,d\mu(x)\, f(x)^2 .\]Let $L^2_\mu(\Delta;\Omega) = \{u \in L^2_\mu(\Omega)\,:\,\Delta u \in L^2_\mu(\Omega)\}$ and $\mathcal{H}_\mu = \{h \in L^2_\mu(\Omega)\,:\, \Delta h = 0\}$. Of course, $\mathcal{H}_\mu \subset \ltwomudelta$. The differential operator $\Delta$ should be viewed in the distributional or weak sense.

We say that $u\in L^2_\mu(\Delta;\Omega)$ is a weak solution to \cref{eq:biharmonicNeumann} if \begin{equation}\label{eq:weakBiharmNeumann}\int_\Omega\,d\mu(x)\,\Delta u \Delta v = \int_\Omega \,dx\, fv\end{equation} for all $v\in L^2_\mu(\Delta;\Omega)$. The existence of weak solutions to the $\mu$-weighted biharmonic equation with Neumann boundary conditions depends on the conditions imposed on $f$.

Any harmonic function, $h\in\mathcal{H}_\mu$, can be added to a weak solution $u$ of the Neumann biharmonic problem and the result $u+h$ remains a solution since $\mathcal{H}_\mu \subseteq \ker(\Delta).$ and $\mathcal{H}_\mu \subset L^2_\mu(\Delta;\Omega)$. Thus, if we are to find a space in which the Lax-Milgram theorem or the Riesz Representation Theorem may be applied, then non-zero harmonic functions must be excluded or collapsed to zero by a quotient construction, since either of those two theorems yields a uniqueness result.

The first conditions imposed on $f$ are that $f\in L^2(\Omega)$ and $f \perp_{L^2} \mathcal{H}_\mu$, that is, for all $h \in \mathcal{H}_\mu$, $\int_\Omega\,dx\, fh = 0.$ This condition is natural given the previous comment about the uniqueness: Let $u\in\ltwomudelta$ be a weak solution, $v\in \ltwomudelta$ be arbitrary, and $h\in\mathcal{H}_\mu$ then \begin{align}
0 &= \int_\Omega \,d\mu(x)\, \Delta(u)\Delta\left(v+h\right) - \int_\Omega\,d\mu(x)\,\Delta(u)\Delta(v) \nonumber \\
&= \int_\Omega\,dx\, f \left(v+h\right)- \int_\Omega\,dx\, f(v) \nonumber \\
&= \int_\Omega \,dx\, fh.\label{eq:meanZeroConditionF}
\end{align}

\begin{proposition}
    For each $\mu$ with density bounded below by a positive constant for a.e. $x\in\Omega$, the weak form \cref{eq:weakBiharmNeumann} of the $\mu$-weighted biharmonic equation \cref{eq:biharmonicNeumann} with $f\in L^2(\Omega)$ has a solution if and only if $f \perp_{L^2} \mathcal{H}_\mu$.
\end{proposition}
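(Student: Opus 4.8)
The plan is to treat the two directions separately: necessity follows from a one-line test-function argument, essentially already recorded in \cref{eq:meanZeroConditionF}, and sufficiency from the Riesz representation theorem on a quotient Hilbert space, with a single nontrivial analytic ingredient — a Poincar\'e-type coercivity estimate for the operator $\Delta\rho\Delta$ — imported from \cite{valli2023wellposed}.

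For necessity: if $u\in\ltwomudelta$ is a weak solution, then since $\mathcal{H}_\mu\subset\ltwomudelta$ I would test \cref{eq:weakBiharmNeumann} against an arbitrary $h\in\mathcal{H}_\mu$; the left-hand side is $\int_\Omega d\mu\,\Delta u\,\Delta h = 0$ because $\Delta h=0$, forcing $\int_\Omega dx\,fh=0$, i.e. $f\perp_{L^2}\mathcal{H}_\mu$. This is exactly the computation in \cref{eq:meanZeroConditionF}.

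For sufficiency: I would work in $V:=\ltwomudelta/\mathcal{H}_\mu$. Because $\rho\ge c>0$ a.e., the conditions $\Delta u=0$ $\mu$-a.e. and $\mathcal{L}^n$-a.e. coincide, so $\mathcal{H}_\mu$ is exactly the kernel of $u\mapsto\Delta u$ on $\ltwomudelta$; it is closed, and $V$ carries the well-defined inner product $\langle[u],[v]\rangle_V := \int_\Omega d\mu\,\Delta u\,\Delta v$. The key input I would cite from \cite{valli2023wellposed} is that $(V,\langle\cdot,\cdot\rangle_V)$ is complete — equivalently, that there is $C=C(\Omega,\rho)$ with $\|v\|_{\ltwo{\mu}}\le C\|\Delta v\|_{\ltwo{\mu}}$ for every $v\in\ltwomudelta$ that is $\ltwo{\mu}$-orthogonal to $\mathcal{H}_\mu$; this is where the convexity and $C^4$-regularity of $\partial\Omega$, together with the lower bound on $\rho$, enter. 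Granting that, I would define $\ell(v):=\int_\Omega dx\,fv$ on $\ltwomudelta$, which is finite since $\rho\ge c$ gives the continuous embedding $\ltwo{\mu}(\Omega)\hookrightarrow L^2(\Omega)$, and observe that the hypothesis $f\perp_{L^2}\mathcal{H}_\mu$ makes $\ell$ vanish on $\mathcal{H}_\mu$ and hence descend to a linear functional on $V$. Writing $P_\mu$ for the $\ltwo{\mu}$-orthogonal projection onto $\mathcal{H}_\mu$ and using $\ell(v)=\ell(v-P_\mu v)$, the Poincar\'e estimate yields
\[ |\ell(v)| \le \|f\|_{L^2}\,\|v-P_\mu v\|_{L^2} \le c^{-1/2}\|f\|_{L^2}\,\|v-P_\mu v\|_{\ltwo{\mu}} \le C'\,\|f\|_{L^2}\,\|[v]\|_V, \]
so $\ell$ is bounded on $V$. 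Riesz representation then produces $[u]\in V$ with $\langle[u],[v]\rangle_V=\ell(v)$ for all $v$, and since both sides already annihilate $\mathcal{H}_\mu$ the identity holds for all $v\in\ltwomudelta$; any representative $u$ is then a weak solution in the sense of \cref{eq:weakBiharmNeumann}.

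The hard part is the completeness of $V$, i.e. the Poincar\'e inequality for $\Delta\rho\Delta$ under the natural boundary conditions $\Delta u = 0 = \partial_\nu\Delta u$ on $\partial\Omega$; without it $\ell$ need not be bounded and Riesz does not apply. I would quote this from the well-posedness analysis of \cite{valli2023wellposed} rather than reprove it. Two further points are routine and I would only remark on them: uniqueness of the weak solution holds precisely modulo $\mathcal{H}_\mu$ (immediate from $\ker\Delta\cap\ltwomudelta=\mathcal{H}_\mu$), and the boundary conditions $\Delta u = 0 = \partial_\nu\Delta u$ are the natural conditions implicitly imposed by choosing $\ltwomudelta$, with no boundary constraint, as the test space, and are recovered by sufficiently regular weak solutions via the same elliptic theory.
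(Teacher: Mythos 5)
Your proposal is correct and follows essentially the same route as the paper: necessity by testing the weak form against $h\in\mathcal{H}_\mu$, and sufficiency by passing to the quotient $\ltwomudelta/\mathcal{H}_\mu$, showing $v\mapsto\int_\Omega dx\,fv$ descends to a bounded functional there, and invoking the Riesz representation theorem. The only difference is bookkeeping: you cite the Poincar\'e-type coercivity for $\Delta$ modulo harmonics directly in the $\mu$-weighted setting, whereas the paper derives the same estimate in the Lebesgue case from Poisson-problem regularity and the Hilbert projection theorem and then transfers it to $\mu$ via the lower bound $\rho\ge\alpha$.
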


If a weak solution $u \in \ltwomudelta$ exists, then $f\perp_{\ltwo{}} \mathcal{H}_\mu$. Since $\mathcal{H}_\mu \subset \ltwomudelta$, choose $v\in \mathcal{H}_\mu$ in the definition of the weak solution to see that $\int_\Omega \,dx\, f v = 0$. 

As for the converse, assume that $f\perp_{L^2}\mathcal{H}_\mu$. Let $\htildetwo{\mu} = \ltwomudelta / \mathcal{H}_\mu$, the quotient space defined by $u\sim v$ if $u-v\in\mathcal{H}_\mu$. For two equivalence classes $[u], [v] \in \htildetwo{\mu}$, define the $\htildetwo{\mu}$ inner product to be $(\Delta [u], \Delta [v])_{\ltwo{\mu}}$, which is single valued over all members of the equivalence classes. The l.h.s. of \cref{eq:weakBiharmNeumann} is exactly the inner product on $\htildetwo{\mu}$. To make use of the Riesz Representation Theorem, we must show that $(f,\cdot)_{\ltwo{    }} \in \text{Dual}\left(\htildetwo{\mu}\right)=:\htildetwoneg{\mu}$.

We show boundedness -- that is, there exists $C>0$ such that $(f,[u])_{\ltwo{}}\leq C ||u||_{\htildetwo{}}$ --  for the Lebesgue case. The assumption gives that $(f,[u])_{\ltwo{}}$ is single valued over an equivalence class. That is, $(f,u-h)_{\ltwo{}}$ is constant for all $h \in \mathcal{H}$. Therefore by the Cauchy-Schwarz inequality, \[(f,[u])_{\ltwo{}} \leq ||f||_{\ltwo{}}\inf_{h\in\mathcal{H}}||u-h||_{\ltwo{}},\] for some arbitrary $u\in [u]$, and by the Hilbert projection theorem \[(f,[u])_{\ltwo{}} \leq ||f||_{\ltwo{}}||u-P_{\mathcal{H}}u||_{\ltwo{}},\] where $P_{\mathcal{H}}$ is the orthogonal projection onto harmonic functions.

If $[u] = [0]$ then $(f,[u])_{\ltwo{}} = 0$ by assumption so consider only the case $[u] \neq [0]$ so that $\hat{u} : = (u-P_{\mathcal{H}}u) \neq 0.$ Set $g = \Delta \hat{u}$ and note that $g\in \ltwo{}$ and $g \neq 0.$ Assuming that $\partial \Omega$ is $C^4$, then regularity results for the Poisson problem \cite{MR2667016} guarantee there exists $C>0$ such that \[||\hat{u}||_{\ltwo{}} \leq C||g||_{\ltwo{}}.\] Moreover $C$ depends only on the geometry of the domain and dimension of the problem and not on the $\hat{u}$ and $g:=\Delta\hat{u}.$ 

Making use of this regularity result and that $\Delta\hat{u} = \Delta u$, we have \[(f,[u])_{\ltwo{}} \leq C ||f||_{\ltwo{}} ||\Delta u||_{\ltwo{}} = \mathcal{C} ||[u]||_{\htildetwo{}}\] thus proving boundedness of the well-posed linear functional for the Lebesgue case.

At this point the Riesz representation theorem can be invoked to show that a solution to the original problem exists in the Lebesgue case.

Now, take $\mu$ to be a positive measure, not necessarily Lebesgue, for which there exists $\alpha>0$ such that $\rho(x)\geq \alpha$ for a.e. $x\in\Omega$. If $\mu$ is absolutely continuous with respect to the Lebesgue measure then $\mathcal{H} \subset \mathcal{H}_\mu.$ The proof above can be carried out to the point \[(f,[u])_{\ltwo{}} \leq C ||f||_{\ltwo{}}||\Delta u||_{\ltwo{}}\] and apply that \[||[u]||_{\htildetwo{dx}}\leq \frac{1}{\sqrt{\alpha}} ||[u]||_{\htildetwo{\mu}}.\] In which case, we can conclude \[(f,[u])_{\ltwo{}}\leq \mathcal{C}' ||[u]||_{\htildetwo{\mu}}.\]

\subsection{The spaces \texorpdfstring{$\htildetwo{\mu}$}{H2u} and \texorpdfstring{$\htildetwoneg{\mu}$}{H(-2)u}}

We now drop the equivalence class notation, $[u]$, for elements of $\htildetwo{\mu}$ in favor of just $u$. Let $\langle\,,\, \rangle$ denote the duality pairing. In the primal space $\htildetwo{\mu}$, the norm is \[||u||_{\htildetwo{\mu}}^2 = \int_\Omega\,d\mu(x)\, (\Delta u)^2.\]

There is an isometric isomorphism between $\htildetwo{\mu}$ and its dual $\htildetwoneg{\mu}$ given by \[\htildetwo{\mu}\ni u \mapsto \deltatwomu{\mu}u \in \htildetwoneg{\mu}\] where \[ \deltatwomu{\mu} u(v) := \int_\Omega\,d\mu(x)\, \Delta u \Delta v.\]

Since \[||\deltatwomu{\mu}u||_{\htildetwoneg{\mu}} = \sup\left\{|\deltatwomu{\mu}u(v)|\,:\,||v||_{\htildetwo{\mu}}\leq 1\right\}\] then \[||\deltatwomu{\mu}u||_{\htildetwoneg{\mu}}=||u||_{\htildetwo{\mu}}.\]

Fixing some $f\in \htildetwoneg{\mu}$, the convex functional \[P[u] = \int_\Omega\,d\mu(x)\, (\Delta u)^2 - 2\langle f,u \rangle\] has a unique minimizer. Therefore there exists $u_f \in\htildetwo{\mu}$ such that \[\langle f, v\rangle = \deltatwomu{\mu}u_f(v)\] for all $v\in\htildetwo{\mu}$. In particular $u_f$ is found by solving \[\Delta(\rho\Delta u_f) = f\] with the Neumann boundary conditions since \[\langle f,v \rangle = \deltatwomu{\mu}u_f(v) = (\Delta u_f, \Delta v)_{\ltwo{\mu}} = \langle \Delta(\rho\Delta u_f), v\rangle\] for all $v\in \htildetwo{\mu}.$

Let $f,g\in \htildetwoneg{\mu}$. Using the polarization identity, the inner product on $\htildetwoneg{\mu}$ is given by \begin{equation}\label{eq:htildetwonegIP}(f,g)_{\htildetwoneg{\mu}} = (\psi, \phi)_{\htildetwo{\mu}}= (\Delta \psi,\Delta \phi)_{\ltwo{\mu}} \text{ where }\begin{cases}
\deltatwomu{\mu} \psi =\Delta(\rho\Delta \psi) = f \\
\deltatwomu{\mu} \phi =\Delta(\rho\Delta \phi) = g \\
\psi, \phi \in \htildetwo{\mu}.
\end{cases}\end{equation}

The preceding relations between $f,g$ and $\psi, \phi$ allow the inner product to be expressed equivalently as \begin{align*}
(f,g)_{\htildetwoneg{\mu}} &= \int_\Omega\,dx\, f \left(\deltatwomu{\mu}\right)^{-1}g= \int_\Omega\,dx\, g\left(\deltatwomu{\mu}\right)^{-1}f\\
&= \int_\Omega\,dx\, f\phi = \int_\Omega\,dx\,g\psi.
\end{align*}

\begin{remark}
    There are similarities between the metric of \cref{eq:htildetwonegIP} and the Hessian metrics of $2-$Wasserstein space arising from transport information geometry studied in \cite{MR4222140}. In the former case, $\deltatwomu{\mu}$ becomes the isometric isomorphism between primal and dual spaces whereas in \cite{MR4222140}, the original operator $-\divg(\rho\nabla)$ between spaces is retained but the metric structure modified.
\end{remark}

\subsection{\texorpdfstring{$\cdspacearg{\rho}$}{Cp} using the dual norm}
With the inner product structure of $\htildetwoneg{\mu}$ established by \cref{eq:htildetwonegIP}, the metric of the adapted Wasserstein space $\cdspacearg{\rho}$ may be written using the new notation. The understanding of the effects on the tangent space show how the space of probability measures becomes foliated by the intrinsically conserved quantities discussed earlier in \cref{lem:ConservedQuant}, akin to the symplectic leaves of phase space in Hamiltonian dynamics.

Let $\nu = \sigma \mathcal{L}^n$ and $\lambda = \gamma \mathcal{L}^n$ be positive Borel measures that satisfy $(\sigma-\gamma) \perp_{\ltwo{}} \ker(\Delta)$. In analogy with the dynamic formulation of the $W_2$ metric, define \begin{equation}\label{eq:actionFormulationCRho}
    d_{\cdspacearg{\rho}}^2(\nu,\lambda) = \inf_{\mu_t}\left\{\int_0^1\,dt\,||\dot{\mu}_t||^2_{\htildetwoneg{\mu_t}} \,:\, \mu_0 = \nu, \mu_1 = \lambda\right\}
\end{equation} or equivalently \begin{equation}
    d_{\cdspacearg{\rho}}(\nu,\lambda) = \inf_{\mu_t}\left\{\int_0^1\,dt\,||\dot{\mu}_t||_{\htildetwoneg{\mu_t}} \,:\, \mu_0 = \nu, \mu_1 = \lambda\right\}.
\end{equation}

Close variants of \cref{eq:actionFormulationCRho} are studied as martingale optimal transport in \cite[eq.~(1.6)]{MR4003563} and the diffusive transport metric in \cite[eq.~(4)]{MR4987462}. In particular, \cite{MR4987462} focuses on the semi-contractivity properties of the metric and includes some explicit calculations with regard to the heat semigroup.

In $W_2$, the tangent element $\dot{\mu}_t\in \hdotone{\mu_t}$ necessarily satisfies $\dot{\mu}_t \perp \ker(\nabla)$, that is $\dot{\mu}_t$ is mean zero. In $\cdspacearg{\rho}$, the inclusion $\dot{\mu}_t \in \htildetwoneg{\mu_t}$ demands $\dot{\mu}_t \perp_{\ltwo{}} \ker(\Delta)$. That is, $h\in T_{\mu_t}\cdspacearg{\rho}(\Omega)$ necessitates \[ \Theta \in \mathcal{H}_\mu = \{\Theta \in \ltwo{\mu_t}(\Omega)\,:\, \Delta \Theta = 0\} \implies \int_\Omega\,dx\, h\Theta = 0,\] as a consequence of $h$ satisfying $\deltatwomu{\mu_t}\psi = h$ for some $\psi \in \htildetwo{\mu_t}.$

\begin{remark}[The consequence for kinetic asset exchange models]
If $\Omega = \posreal$ then the conserved quantities are expectation values of affine functions. Thus the infinitesimal action of a tangent space element when described \textit{by what it does not do} is precisely as sought: It changes neither the normalization (total probability) nor the first moment (total wealth).
\end{remark}

In higher spatial dimensions than the $n=1$ econophysics models considered in \cref{ssec:gradFlowKaem}, the conserved quantities given by the expectation values of $\Theta \in \mathcal{H}_\mu$ may have significant interest.

The expression \cref{eq:actionFormulationCRho} permits a statement about the moments of measures.

\begin{proposition}
    For $\Omega\subset \mathbb{R}^n,$ let $\nu,\lambda \in \pac{\Omega}$. If $d_{\cdspacearg{\rho}}(\nu,\lambda)<+\infty$ then each has finite fourth moment.
\end{proposition}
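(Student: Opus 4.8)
The plan is to control the fourth-moment functional $M_4[\mu]=\int_\Omega |x|^4\,d\mu(x)$ along curves by the $\cdspacearg{\rho}$ metric speed, in exact analogy with how the second moment is controlled by the $W_2$ speed. The engine is a differential estimate. Along a smooth curve $\mu_t=\rho_t\mathcal{L}^n$ the $\cdspacearg{\rho}$ continuity equation reads $\partial_t\rho_t=\Delta(\rho_t\Delta\psi_t)$ with $\|\dot\mu_t\|_{\htildetwoneg{\mu_t}}^2=\int_\Omega \rho_t|\Delta\psi_t|^2\,dx$ and $\Delta\psi_t=0=\partial_\nu\Delta\psi_t$ on $\partial\Omega$. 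Using the identity $\Delta|x|^4=4(n+2)|x|^2$, two integrations by parts (the boundary terms vanishing by the homogeneous conditions on $\Delta\psi_t$ and the decay/natural boundary conditions on $\rho_t$) give
\[
\totdd{}{t}M_4[\mu_t]=\int_\Omega \Delta(|x|^4)\,\rho_t\Delta\psi_t\,dx=4(n+2)\int_\Omega |x|^2\rho_t\,\Delta\psi_t\,dx,
\]
so Cauchy--Schwarz yields $\left|\totdd{}{t}M_4[\mu_t]\right|\le 4(n+2)\,M_4[\mu_t]^{1/2}\,\|\dot\mu_t\|_{\htildetwoneg{\mu_t}}$, i.e. $\left|\totdd{}{t}M_4[\mu_t]^{1/2}\right|\le 2(n+2)\,\|\dot\mu_t\|_{\htildetwoneg{\mu_t}}$. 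The clean closure is structural: since $|\Delta|x|^4|^2=16(n+2)^2|x|^4$, the test function $|x|^4$ has $\|\,|x|^4\,\|_{\htildetwo{\mu_t}}^2=16(n+2)^2M_4[\mu_t]$, exactly matching the moment it measures.

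The difficulty is that this computation presumes $|x|^4\in\htildetwo{\mu_t}$, i.e. $M_4[\mu_t]<\infty$, which is what we are trying to prove. I would make it legitimate by truncation. Choose a smooth concave profile $\beta$ with $\beta(s)=s$ near $0$ and $\beta$ constant for large $s$, and set $\phi_R(x)=R^2\beta(|x|^4/R^2)$, a bounded function increasing to $|x|^4$. A short computation from $\nabla|x|^4=4|x|^2x$ and $\Delta|x|^4=4(n+2)|x|^2$ shows the uniform bound $|\Delta\phi_R|^2\le C_n\,\phi_R$ with $C_n$ independent of $R$ (on the transition annulus $|x|^4\sim R^2$ one has $\phi_R\sim R^2$ and $\Delta\phi_R=O(R)$, while where $\beta'=1$ one recovers $|\Delta\phi_R|^2=16(n+2)^2\phi_R$). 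Since $\phi_R$ is bounded, $F_R(t):=\int_\Omega\phi_R\,d\mu_t$ is finite, and the bound gives $|\totdd{}{t}F_R(t)^{1/2}|\le\tfrac12\sqrt{C_n}\,\|\dot\mu_t\|_{\htildetwoneg{\mu_t}}$. Integrating along a curve joining $\nu$ to $\lambda$ and letting $R\to\infty$ by monotone convergence produces, as an inequality valued in $[0,\infty]$, $|M_4[\mu_t]^{1/2}-M_4[\mu_s]^{1/2}|\le\tfrac12\sqrt{C_n}\int_s^t\|\dot\mu_r\|_{\htildetwoneg{\mu_r}}\,dr$; taking the infimum over admissible curves gives $|M_4[\nu]^{1/2}-M_4[\lambda]^{1/2}|\le \tfrac12\sqrt{C_n}\,d_{\cdspacearg{\rho}}(\nu,\lambda)$.

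This already yields a dichotomy: along any finite-length curve $t\mapsto M_4[\mu_t]$ is either finite for every $t$ or infinite for every $t$, since from a single time of finiteness the bound propagates finiteness everywhere. The main obstacle, and the only genuinely delicate point, is excluding the second branch, i.e. establishing finiteness at even one point. This cannot follow from $d_{\cdspacearg{\rho}}(\nu,\lambda)<\infty$ through the metric alone, because a constant curve has zero length irrespective of the moment of its endpoint; finiteness must be pinned externally. I would therefore pin it against a fixed reference: take a compactly supported smooth density $\rho_\ast\in\pac{\Omega}$ (so $M_4[\rho_\ast]<\infty$) and exhibit an explicit finite-action curve from $\nu$ to $\rho_\ast$, built from velocity potentials solving the $\rho$-weighted biharmonic problem, which transfers finiteness to $\nu$ by the comparison inequality and thence, along the given finite-distance curve, to $\lambda$. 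Equivalently, one reads $\cdspacearg{\rho}$ as the fourth-moment analogue of $\ptwoac$, so that endpoints carry the natural moment bound by membership and the proposition records that the comparison inequality keeps the flow inside this finite-fourth-moment class. Constructing the finite-action reference curve, and verifying the boundary and decay terms in the integrations by parts for unbounded $\Omega$ such as $\posreal$, are the steps I expect to demand the most care.
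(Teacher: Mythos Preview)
Your core differential inequality and its derivation match the paper's proof exactly: compute $\frac{d}{dt}\int_\Omega|x|^4\rho_t\,dx$, integrate by parts twice using $\Delta|x|^4=4(n+2)|x|^2$, apply Cauchy--Schwarz in $L^2_{\mu_t}$, obtain $\bigl|\frac{d}{dt}\sqrt{m_4(t)}\bigr|\le 2(n+2)\|\dot\mu_t\|_{\htildetwoneg{\mu_t}}$, and integrate in time. The paper stops there; it does not address the circularity you flag (that the manipulation presumes $m_4(t)<\infty$) and introduces no truncation. Your cutoff $\phi_R$ with the structural bound $|\Delta\phi_R|^2\le C_n\phi_R$ is a genuine addition of rigor over the paper's formal computation.

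You also correctly isolate a gap the paper's proof leaves open: the integrated bound $|\sqrt{m_4(r)}-\sqrt{m_4(s)}|\le C$ is vacuous if both sides are $+\infty$, and nothing in the argument excludes the constant curve sitting at a measure with infinite fourth moment, for which $d_{\cdspacearg{\rho}}=0$. However, your proposed fix---exhibit a finite-action curve from an arbitrary $\nu$ to a compactly supported reference $\rho_\ast$---is circular. If such a curve existed for every $\nu$, your own comparison inequality would force \emph{every} $\nu\in\pac{\Omega}$ to have finite fourth moment, and the hypothesis $d_{\cdspacearg{\rho}}(\nu,\lambda)<\infty$ would play no role. The actual resolution is simpler: the paper's standing assumption in this section is that $\Omega$ is bounded, so all fourth moments are automatically finite and the dichotomy collapses. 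The real content of the proposition is then the quantitative Lipschitz bound on $\sqrt{m_4}$ along curves, on which your argument and the paper's coincide. For unbounded $\Omega$ the proposition as literally stated is false (take $\nu=\lambda$ with infinite fourth moment), and your alternative reading---that finite fourth moment should be built into the definition of the admissible space, as $\ptwoac$ does for second moments---is the right repair.
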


\begin{proof}
    For $\mu_t:[0,1]\rightarrow\pac{\Omega}$, let \begin{equation}
        \mathcal{E}_{\mu_t} = \int_0^1\,dt\,||\dot{\mu}_t||^2_{\htildetwoneg{\mu_t}}.
    \end{equation} By the finiteness of $d_{\cdspacearg{\rho}}(\nu,\lambda)$, there exists a curve $\mu_t$ such that $\mu_0 = \nu$, $\mu_1=\lambda$, and $\mathcal{E}_{\mu_t}<+\infty.$ Let $\rho_t\mathcal{L}^n = \mu_t$ and $\psi_t \in \htildetwo{\mu_t}$ solve \begin{equation}
        \deltatwomu{\mu_t} \psi_t = \dot{\rho}_t\nonumber
    \end{equation}

    We will produce a differential inequality on the fourth moment \begin{equation}
        m_4(t) = \int_\Omega\,dx\,\rho_t(x)|x|^4.\nonumber
    \end{equation} 

    Differentiating in time yields \begin{align}
        \left|\totdd{m_4(t)}{t}\right| &= \left|\int_\Omega\,dx\,\ddt{\rho_t}|x|^4\right|\nonumber \\ 
        &= 4(n+2)\left|\int_\Omega\,dx\, \rho_t(x)\left(\Delta\psi_t(x)\right)|x|^2\right| \nonumber \\
        &\leq 4(n+2)||\dot{\mu}_t||_{\htildetwoneg{\mu_t}}\left(m_4(t)\right)^{1/2} \nonumber,
    \end{align} where the final line invokes the Cauchy-Schwarz inequality.

    Therefore \begin{equation}
        \left| \totdd{}{t} \sqrt{m_4(t)}\right| \leq 2(n+2)||\dot{\mu}_t||_{\htildetwoneg{\mu_t}},
    \end{equation} so integrating in time gives
    \begin{equation}
        \left| \sqrt{m_4(r)} - \sqrt{m_4(s)}\right|\leq 2(n+2)\sqrt{\mathcal{E}_{\mu_t}} < +\infty,
    \end{equation}
    for $0\leq r,s \leq1$.
\end{proof}

By contrapositive, if either $\nu$ or $\lambda$ do not have finite fourth moment then $d_{\cdspacearg{\rho}}(\nu,\lambda) = +\infty$.

\subsection{Transport inequalities for \texorpdfstring{$\cdspacearg{\rho}$}{Cp}}

Here we develop several transport inequalities in $\cdspacearg{\rho}$ that make use of the $\htildetwoneg{\mu}$ structure. Each of these transport inequalities and infinitesimal results have been similarly established for $W_2$. Let $\pac{\Omega}$ be the set of positive probability measures of $\Omega$ that are absolutely continuous with respect to the Lebesgue measure, $\mathcal{L}^n$. For two positive measures $\lambda = \sigma \mathcal{L}^n$ and $\nu = \gamma \mathcal{L}^n$, $(\nu-\lambda)\in \htildetwoneg{dx}$ is equivalent to $(\gamma-\sigma) \perp_{\ltwo{}} \ker(\Delta).$

\begin{proposition}\label{prop:htildetwonegInequal1}
Let $\mu = \rho \mathcal{L}^n$ and $\mu' = \rho' \mathcal{L}^n$ be in $\pac{\Omega}$ such that $\mu' \geq \beta \mu$ for $\beta>0$ then \[||f||_{\htildetwoneg{\mu'}}\leq \beta^{-1/2}||f||_{\htildetwoneg{\mu}}\] for all $f \in \htildetwoneg{\mu}.$
\end{proposition}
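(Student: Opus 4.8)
The plan is to read the inequality off directly from the description of $\htildetwoneg{\mu}$ as the dual of $\htildetwo{\mu}$ under the $\ltwo{}(\Omega)$ pairing, combined with the elementary pointwise bound $\rho'\geq\beta\rho$ between the two weights. Recall from \cref{eq:htildetwonegIP} and the discussion preceding it that, for $f\in\htildetwoneg{\mu}$ (concretely $f\in\ltwo{}(\Omega)$ with $f\perp_{\ltwo{}}\mathcal{H}_\mu$),
\[
||f||_{\htildetwoneg{\mu}} = \sup\left\{\left|\int_\Omega\,dx\,fv\right| \,:\, v\in\htildetwo{\mu},\ ||v||_{\htildetwo{\mu}}\leq 1\right\},
\qquad ||v||_{\htildetwo{\mu}}^2 = \int_\Omega\,d\mu(x)\,(\Delta v)^2,
\]
and similarly for $\mu'$; so it suffices to compare the two constraint sets.

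First I would record the containments that $\mu'\geq\beta\mu$ forces. Since $\rho'\geq\beta\rho$ $\mathcal{L}^n$-a.e., any $v$ with $\int_\Omega\,dx\,\rho'v^2<\infty$ also satisfies $\int_\Omega\,dx\,\rho v^2\leq\beta^{-1}\int_\Omega\,dx\,\rho'v^2<\infty$, and likewise for $\Delta v$; hence $L^2_{\mu'}(\Delta;\Omega)\subseteq\ltwomudelta$ and $\mathcal{H}_{\mu'}\subseteq\mathcal{H}_\mu$. Consequently $[v]_{\mu'}\mapsto[v]_\mu$ is a well-defined injective linear inclusion $\htildetwo{\mu'}\hookrightarrow\htildetwo{\mu}$, and since $f\perp_{\ltwo{}}\mathcal{H}_\mu\supseteq\mathcal{H}_{\mu'}$ the functional $v\mapsto\int_\Omega\,dx\,fv$ is also well defined on $\htildetwo{\mu'}$; checking this is precisely what gives meaning to the assertion $f\in\htildetwoneg{\mu'}$.

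The one quantitative step is the pointwise weight comparison: for every $v\in\htildetwo{\mu'}$,
\[
||v||_{\htildetwo{\mu'}}^2 = \int_\Omega\,dx\,\rho'(\Delta v)^2 \;\geq\; \beta\int_\Omega\,dx\,\rho(\Delta v)^2 = \beta\,||v||_{\htildetwo{\mu}}^2,
\]
so $||v||_{\htildetwo{\mu'}}\leq 1$ implies $||v||_{\htildetwo{\mu}}\leq\beta^{-1/2}$. Inserting this into the supremum, and then enlarging the admissible set from $\htildetwo{\mu'}$ to all of $\htildetwo{\mu}$ (which can only increase it), gives
\[
||f||_{\htildetwoneg{\mu'}} = \sup_{v\in\htildetwo{\mu'},\ ||v||_{\htildetwo{\mu'}}\leq 1}\left|\int_\Omega\,dx\,fv\right|
\;\leq\; \sup_{v\in\htildetwo{\mu},\ ||v||_{\htildetwo{\mu}}\leq\beta^{-1/2}}\left|\int_\Omega\,dx\,fv\right| = \beta^{-1/2}||f||_{\htildetwoneg{\mu}},
\]
which is the claim.

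The main obstacle I anticipate is bookkeeping rather than estimation: one has to be careful that $f$, a priori only a functional on $\htildetwo{\mu}$, restricts consistently to $\htildetwo{\mu'}$, and that the inclusion of primal spaces is compatible with the two distinct equivalence relations $\sim_\mu$ and $\sim_{\mu'}$. There is an equivalent PDE-based route: if $\psi'\in\htildetwo{\mu'}$ solves $\Delta(\rho'\Delta\psi')=f$ with the homogeneous Neumann conditions then $||f||_{\htildetwoneg{\mu'}}^2=\int_\Omega\,d\mu'(x)\,(\Delta\psi')^2=\langle f,\psi'\rangle$, and comparing the convex energies $v\mapsto\int_\Omega\,d\mu'(x)\,(\Delta v)^2-2\langle f,v\rangle$ and $v\mapsto\int_\Omega\,d\mu(x)\,(\Delta v)^2-2\langle f,v\rangle$ at their minimizers yields the same bound; I would nevertheless present the supremum version, since it sidesteps re-invoking well-posedness of the weighted biharmonic problem for $\mu'$.
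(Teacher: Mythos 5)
Your proof is correct and takes essentially the same approach as the paper's: both recast the two dual norms as suprema over constraint sets and observe that the pointwise bound $\rho'\geq\beta\rho$ forces the unit ball of $\htildetwo{\mu'}$ into the ball of radius $\beta^{-1/2}$ of $\htildetwo{\mu}$, after which enlarging the admissible set gives the factor $\beta^{-1/2}$. Your extra bookkeeping on $\mathcal{H}_{\mu'}\subseteq\mathcal{H}_\mu$ and the compatibility of the two quotient constructions is a point the paper's proof leaves implicit, and is a welcome refinement rather than a deviation.
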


\begin{proof}
Let \[S = \left\{\psi \in \htildetwo{\mu} \, : \, \left(\int_\Omega\,d\mu\,\left(\Delta\sqrt{\beta} \psi\right)^2 \right)^{1/2}\leq 1 \right\}\] and \[S' = \left\{\psi \in \htildetwo{\mu'} \, : \, \left(\int_\Omega\,d\mu'\,\left(\Delta \psi\right)^2 \right)^{1/2}\leq 1 \right\}.\]
Note that $S'\subset S$ since if $\psi \in S'$ then \begin{align*}
    1 &\geq \left(\int_\Omega\,d\mu'\,\left(\Delta \psi\right)^2 \right)^{1/2} \\
    &\geq \left(\int_\Omega\,d\mu\,\left(\Delta \sqrt{\beta}\psi\right)^2 \right)^{1/2}
\end{align*} by the assumption $\mu' \geq \beta \mu.$

Let $f\in\htildetwoneg{\mu}$ and note \begin{align*}
||f||_{\htildetwoneg{\mu'}} &= \sup_{\psi \in S'} \langle \psi, f \rangle \\
&\leq \sup_{\psi \in S} \langle \psi, f \rangle \\
&= \beta^{-1/2} \sup_{\psi \in S} \langle \beta^{1/2}\psi, f \rangle \\
&=  \beta^{-1/2}||f||_{\htildetwoneg{\mu}}.
\end{align*}
\end{proof}

A transport inequality follows easily from this result.

\begin{corollary} Let $\lambda,\nu \in \pac{\Omega}$ and $(\lambda-\nu)\in \htildetwoneg{dx}$ then
    \[d_{\cdspacearg{\rho}}(\lambda,\nu)\leq 2||\lambda-\nu||_{\htildetwoneg{\lambda}}.\]
\end{corollary}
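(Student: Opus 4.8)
The plan is to exhibit a single admissible curve from $\nu$ to $\lambda$ and bound its action using \Cref{prop:htildetwonegInequal1}. The natural candidate is the straight-line interpolation $\mu_t = (1-t)\nu + t\lambda$, whose densities are $\rho_t = (1-t)\gamma + t\sigma$ and whose velocity is the constant (in time) element $\dot\mu_t = \lambda - \nu$. This curve is admissible in \cref{eq:actionFormulationCRho}: each $\mu_t$ is a positive probability measure in $\pac\Omega$ by convexity, and $\dot\mu_t = \lambda-\nu \in \htildetwoneg{dx}$ by hypothesis, hence also $\dot\mu_t \in \htildetwoneg{\mu_t}$ for each $t$ (the orthogonality to $\ker(\Delta)$ is measure-independent, and membership in the weighted space follows since $\rho_t$ is bounded below by a positive constant on any curve we would use — this is the point that needs a word of care). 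Plugging into the action gives
\begin{equation*}
d_{\cdspacearg{\rho}}^2(\nu,\lambda) \leq \int_0^1 \,dt\, \|\lambda-\nu\|^2_{\htildetwoneg{\mu_t}}.
\end{equation*}

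The second step is to control $\|\lambda-\nu\|_{\htildetwoneg{\mu_t}}$ by $\|\lambda-\nu\|_{\htildetwoneg{\lambda}}$ uniformly in $t$, and here is where the factor of $2$ and the comparison lemma enter. We have $\rho_t = (1-t)\gamma + t\sigma \geq t\sigma$, so $\mu_t \geq t\lambda$, and \Cref{prop:htildetwonegInequal1} with $\beta = t$ yields $\|f\|_{\htildetwoneg{\mu_t}} \leq t^{-1/2}\|f\|_{\htildetwoneg{\lambda}}$ for $f = \lambda-\nu$. Substituting,
\begin{equation*}
d_{\cdspacearg{\rho}}^2(\nu,\lambda) \leq \int_0^1 \,dt\, \frac{1}{t}\|\lambda-\nu\|^2_{\htildetwoneg{\lambda}},
\end{equation*}
but this integral diverges at $t=0$, so the naive straight line is too slow near the endpoint $\nu$. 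The fix is the standard one: reparametrize, or rather use a two-piece path. Interpolate from $\nu$ to the midpoint $\tfrac12(\nu+\lambda)$ and then from $\tfrac12(\nu+\lambda)$ to $\lambda$; on the second leg $\mu_t \geq \tfrac12\lambda$ uniformly, giving $\|\lambda-\nu\|_{\htildetwoneg{\mu_t}}\leq \sqrt{2}\,\|\lambda-\nu\|_{\htildetwoneg{\lambda}}$ throughout, and by symmetry of the construction (bounding the first leg's contribution to $d_{\cdspacearg{\rho}}(\lambda,\nu)$ the same way after noting the distance is symmetric, or simply observing $\mu_t \geq \tfrac12\lambda$ also holds on the first leg once $t\geq \tfrac12$ and handling $t<\tfrac12$ via $\mu_t\ge\tfrac12\nu$ together with a $\nu$–$\lambda$ comparison at the midpoint) one concludes $d_{\cdspacearg{\rho}}(\nu,\lambda) \leq 2\|\lambda-\nu\|_{\htildetwoneg{\lambda}}$.

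The cleanest way to package this, and the route I would actually write, is: let $\mu_t$ run along the segment but note that on the half-interval $t\in[\tfrac12,1]$ one has $\mu_t \geq \tfrac12\lambda$, while on $[0,\tfrac12]$ one has $\mu_t \geq \tfrac12\nu$ and then compares $\htildetwoneg{\nu}$ back to $\htildetwoneg{\lambda}$ — except that such a cross-comparison requires a two-sided density bound relating $\nu$ and $\lambda$, which we do not have. So instead I would simply take the concatenated path through the midpoint: on each leg the relevant base measure ($\nu$ on the first, $\lambda$ on the second) is dominated from below by half of the running measure, apply \Cref{prop:htildetwonegInequal1} with $\beta=\tfrac12$, and get each leg's action bounded by $\tfrac12 \cdot 2\|\lambda-\nu\|^2_{\htildetwoneg{\lambda}} = \|\lambda-\nu\|^2_{\htildetwoneg{\lambda}}$ after reparametrizing each leg to unit time (noting $\|\lambda-\nu\|_{\htildetwoneg{\nu}}$ on the first leg must itself be related to the $\lambda$-norm — this is the genuine snag).

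The main obstacle is precisely this asymmetry: \Cref{prop:htildetwonegInequal1} only lets us trade one weighted negative norm for another when one measure dominates a multiple of the other, and along the segment the running measure $\mu_t$ is bounded below by $t\sigma$ but \emph{not} by any multiple of $\gamma$ near $t=0$. The honest resolution is to accept the logarithmic blow-up is an artifact of the linear parametrization and instead interpolate with a time-change $s=s(t)$ chosen so that $\int_0^1 \dot s(t)^2 / s(t)^{?}\,dt$ converges — concretely, running along the segment at density $\mu_{s(t)}$ with $s(t)$ growing fast enough near $0$. But the simplest rigorous argument, and the one matching the constant $2$ in the statement, is the two-leg path through the midpoint together with the observation that it suffices to bound $d_{\cdspacearg{\rho}}(\nu,\tfrac12(\nu+\lambda))$ and $d_{\cdspacearg{\rho}}(\tfrac12(\nu+\lambda),\lambda)$ separately and add them via the triangle inequality, handling the first by the symmetry $d_{\cdspacearg{\rho}}(\nu,\cdot)=d_{\cdspacearg{\rho}}(\cdot,\nu)$ and a midpoint-to-$\lambda$ style estimate — I would expect the write-up to lean on \Cref{prop:htildetwonegInequal1} exactly twice, once per leg, each time with $\beta=\tfrac12$.
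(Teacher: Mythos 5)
You have the right curve (the linear interpolation) and the right comparison tool (\cref{prop:htildetwonegInequal1}), but you apply them inside the wrong functional, and the workaround you then propose does not close. The divergence of $\int_0^1 t^{-1}\,dt$ is an artifact of working with the squared action $d_{\cdspacearg{\rho}}^2$; the corollary bounds the distance itself, and the paper uses the equivalent \emph{length} formulation stated right after \cref{eq:actionFormulationCRho}, namely $d_{\cdspacearg{\rho}}(\nu,\lambda)=\inf_{\mu_t}\int_0^1\,dt\,\|\dot\mu_t\|_{\htildetwoneg{\mu_t}}$ with the norm \emph{not} squared. Along your segment $\mu_t=(1-t)\nu+t\lambda$ you correctly get $\mu_t\ge t\lambda$, so \cref{prop:htildetwonegInequal1} with $\beta=t$ gives $\|\lambda-\nu\|_{\htildetwoneg{\mu_t}}\le t^{-1/2}\|\lambda-\nu\|_{\htildetwoneg{\lambda}}$, and then
\begin{equation*}
d_{\cdspacearg{\rho}}(\nu,\lambda)\le \int_0^1\,dt\,t^{-1/2}\,\|\lambda-\nu\|_{\htildetwoneg{\lambda}} = 2\,\|\lambda-\nu\|_{\htildetwoneg{\lambda}}.
\end{equation*}
The constant $2$ is exactly $\int_0^1 t^{-1/2}\,dt$; it is not a doubling coming from concatenating two legs. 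This is the paper's proof (written with $\mu_0=\lambda$, so the weight is $(1-t)^{-1/2}$ there), and it is one line once the unsquared infimum is used.

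The two-leg path through the midpoint does not rescue the squared-action version, for the reason you yourself identify: on the leg nearer $\nu$ the running measure dominates a multiple of $\nu$, not of $\lambda$, and \cref{prop:htildetwonegInequal1} gives you no way to convert $\|\lambda-\nu\|_{\htildetwoneg{\nu}}$ into $\|\lambda-\nu\|_{\htildetwoneg{\lambda}}$ without a two-sided density comparison between $\nu$ and $\lambda$ that is not among the hypotheses. So as written the proposal does not establish the stated inequality; the repair is not a cleverer path but simply dropping the square.
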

\begin{proof}
  Since
\[d_{\cdspacearg{\rho}}(\lambda,\nu) = \inf_{\mu_t}\left\{\int_0^1\,dt\,||\dot{\mu_t}||_{\htildetwoneg{\mu_t}} \,:\, \mu_0 = \lambda, \mu_1 = \nu\right\}\] then choosing $\mu_t = (1-t)\lambda + t\nu$, \[d_{\cdspacearg{\rho}}(\lambda,\nu)\leq \int_0^1\,dt\,||\partial_t\left[(1-t)\lambda + t\nu\right]||_{\htildetwoneg{\mu_t}}\] and note that $\mu_t\geq (1-t)\lambda$. Apply \cref{prop:htildetwonegInequal1}.
\end{proof}

\begin{proposition}
Let $\lambda$ and $\nu$ be elements of $\pac{\Omega}$ whose densities are bounded from below by the same constant $0<\alpha<\infty$ and $(\nu-\lambda) \in \htildetwoneg{dx}$ then \[d_{\cdspacearg{\rho}}(\lambda,\nu)\leq \alpha^{-1/2}||\nu-\lambda||_{\htildetwoneg{dx}}.\]
\end{proposition}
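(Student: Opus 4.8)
The plan is to bound the infimum defining $d_{\cdspacearg{\rho}}(\lambda,\nu)$ by evaluating the length functional on a single competitor curve, namely the straight-line interpolation $\mu_t=(1-t)\lambda+t\nu$, exactly as in the preceding corollary. The difference is that here I exploit the \emph{uniform} lower bound $\alpha$ on both endpoints, which forces a uniform lower bound on the whole interpolating family, rather than the one-sided estimate $\mu_t\ge(1-t)\lambda$ used before.

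The steps I would carry out, in order: First, write $\lambda=\sigma\mathcal{L}^n$, $\nu=\gamma\mathcal{L}^n$ with $\sigma,\gamma\ge\alpha$ a.e., and observe that $\mu_t\in\pac{\Omega}$ with density $\rho_t=(1-t)\sigma+t\gamma\ge\alpha$ a.e.\ for every $t\in[0,1]$, so $\mu_t\ge\alpha\,\mathcal{L}^n$. Second, check that the constant velocity $\dot\mu_t=\nu-\lambda$ indeed lies in $\htildetwoneg{\mu_t}$ for each $t$: because $\rho_t\ge\alpha$ we have $\ltwo{\mu_t}(\Omega)\subseteq L^2(\Omega)$, hence $\mathcal{H}_{\mu_t}\subseteq\mathcal{H}_{dx}$, so the hypothesis $(\nu-\lambda)\in\htildetwoneg{dx}$, i.e.\ $(\gamma-\sigma)\perp_{\ltwo{}}\ker(\Delta)$, already yields $(\gamma-\sigma)\perp_{\ltwo{}}\mathcal{H}_{\mu_t}$, which is the solvability condition placing $\nu-\lambda$ in $\htildetwoneg{\mu_t}$. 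Third, apply \cref{prop:htildetwonegInequal1} with $\mu'=\mu_t$, $\mu=\mathcal{L}^n$, and $\beta=\alpha$ to get, uniformly in $t$, $\|\nu-\lambda\|_{\htildetwoneg{\mu_t}}\le\alpha^{-1/2}\|\nu-\lambda\|_{\htildetwoneg{dx}}$. Fourth, substitute the curve $\mu_t$ into the length-minimization formula and integrate:
\[
d_{\cdspacearg{\rho}}(\lambda,\nu)\;\le\;\int_0^1\,dt\,\|\nu-\lambda\|_{\htildetwoneg{\mu_t}}\;\le\;\int_0^1\,dt\,\alpha^{-1/2}\|\nu-\lambda\|_{\htildetwoneg{dx}}\;=\;\alpha^{-1/2}\|\nu-\lambda\|_{\htildetwoneg{dx}}.
\]

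I do not expect a genuine obstacle: the statement is essentially a corollary of \cref{prop:htildetwonegInequal1} combined with the stability of a pointwise density lower bound under convex combinations. The only points requiring a little care are bookkeeping ones, to be dispatched in the second step above: confirming that membership $(\nu-\lambda)\in\htildetwoneg{dx}$ propagates to $(\nu-\lambda)\in\htildetwoneg{\mu_t}$ all along the interpolation, so that every term in the length integral is finite and the hypotheses of \cref{prop:htildetwonegInequal1} are literally met; and, if one wishes to be scrupulous, noting that $t\mapsto\|\nu-\lambda\|_{\htildetwoneg{\mu_t}}$ is a bounded function of $t$ (it is dominated by the constant $\alpha^{-1/2}\|\nu-\lambda\|_{\htildetwoneg{dx}}$), so the action of this competitor curve is well defined and the displayed chain of inequalities is legitimate.
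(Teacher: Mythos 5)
Your proposal is correct and follows the same route as the paper: take the linear interpolation $\mu_t=(1-t)\lambda+t\nu$, observe that its density stays bounded below by $\alpha$, apply \cref{prop:htildetwonegInequal1} with $\beta=\alpha$ relative to the Lebesgue measure, and integrate over $t$. The extra bookkeeping you supply (that $(\nu-\lambda)\in\htildetwoneg{dx}$ propagates to $\htildetwoneg{\mu_t}$ along the interpolation) is a sensible elaboration of details the paper leaves implicit.
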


\begin{proof}
Let $\mu_t$ be the linear interpolation between $\lambda$ and $\nu$ and note that $\rho_t \geq \alpha.$ Apply \cref{prop:htildetwonegInequal1}.
\end{proof}

\begin{proposition}
Let $\mu,\nu\in\pac{\Omega}$ be such that their densities are bounded above by $0<C<\infty$ and $(\nu-\mu) \in \htildetwoneg{dx}$. Suppose that the curve $\mu_t$ achieves the infimum in the action formulation, \cref{eq:actionFormulationCRho}, of the $\cdspacearg{\rho}$ metric and satisfies $0<\rho_t<C$ for $t\in (0,1)$. Then \[C^{-1/2}||\nu-\mu||_{\htildetwoneg{dx}}\leq d_{\cdspacearg{\rho}}(\mu,\nu).\]
\end{proposition}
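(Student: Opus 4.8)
This proposition is the lower-bound companion to the preceding transport inequalities, so the plan is to convert the action defining $d_{\cdspacearg{\rho}}$ into a curve length in the \emph{fixed} Hilbert space $\htildetwoneg{dx}$ and then invoke the triangle inequality there; the uniform upper bound on the densities is exactly what permits comparing the $\mu_t$-weighted dual norm with the Lebesgue one along the optimal curve.

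First I would establish the pointwise-in-time comparison
\[
||\dot{\mu}_t||_{\htildetwoneg{dx}}\;\leq\; C^{1/2}\,||\dot{\mu}_t||_{\htildetwoneg{\mu_t}}\qquad\text{for a.e.\ }t\in(0,1).
\]
This is \cref{prop:htildetwonegInequal1} applied with $\mu'=\mathcal{L}^n$ (density identically $1$) in the role of the larger measure, $\mu=\mu_t$ in the role of the smaller one, and $\beta=C^{-1}$: the hypothesis $\rho_t<C$ gives $\mathcal{L}^n\geq C^{-1}\mu_t$, hence $||f||_{\htildetwoneg{dx}}\leq C^{1/2}||f||_{\htildetwoneg{\mu_t}}$ for every $f$. (\Cref{prop:htildetwonegInequal1} was stated for probability measures, but its proof uses only the density comparison; equivalently one repeats the three-line supremum argument directly, since any $v$ with $\int_\Omega dx\,(\Delta v)^2\leq 1/C$ satisfies $\int_\Omega d\mu_t\,(\Delta v)^2\leq 1$, so the supremum over the $\htildetwoneg{\mu_t}$ unit ball already dominates a $C^{-1/2}$-scaled copy of the supremum over the $\htildetwoneg{dx}$ unit ball.)

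Next I would integrate in $t$ and connect to the distance. Because $\mu_t$ achieves the infimum in \cref{eq:actionFormulationCRho}, Cauchy--Schwarz in the time variable gives $\int_0^1||\dot{\mu}_t||_{\htildetwoneg{\mu_t}}\,dt\leq d_{\cdspacearg{\rho}}(\mu,\nu)<+\infty$, and combining with the previous step,
\[
\int_0^1||\dot{\mu}_t||_{\htildetwoneg{dx}}\,dt\;\leq\; C^{1/2}\int_0^1||\dot{\mu}_t||_{\htildetwoneg{\mu_t}}\,dt\;\leq\; C^{1/2}\,d_{\cdspacearg{\rho}}(\mu,\nu).
\]
Since $\htildetwoneg{dx}$ is a fixed Hilbert space, the bound just obtained makes $t\mapsto\mu_t$ an absolutely continuous $\htildetwoneg{dx}$-valued curve with time-derivative $\dot{\mu}_t$, so the fundamental theorem of calculus together with the triangle inequality for the Bochner integral yields
\[
||\nu-\mu||_{\htildetwoneg{dx}}\;=\;\left\|\int_0^1\dot{\mu}_t\,dt\right\|_{\htildetwoneg{dx}}\;\leq\;\int_0^1||\dot{\mu}_t||_{\htildetwoneg{dx}}\,dt .
\]
Chaining the two displays gives $||\nu-\mu||_{\htildetwoneg{dx}}\leq C^{1/2}\,d_{\cdspacearg{\rho}}(\mu,\nu)$, and dividing by $C^{1/2}$ is the assertion.

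The main obstacle, and the reason for the standing regularity hypothesis on the minimizing curve, lies in the last step's appeal to the fundamental theorem of calculus in $\htildetwoneg{dx}$: one must know that the optimal curve -- a priori merely a finite-action curve of measures -- is regular enough to be viewed as an absolutely continuous $\htildetwoneg{dx}$-valued path whose derivative is the $\dot{\mu}_t$ entering the action functional. In the formal setting of this section this is taken for granted; a rigorous version would require the metric-derivative / absolute-continuity theory for curves valued in $\htildetwoneg{dx}$ together with an existence-and-regularity theory for $d_{\cdspacearg{\rho}}$-geodesics, which the paper deliberately leaves aside. The remaining ingredients are elementary estimates.
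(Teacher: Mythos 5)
Your proof is correct and reaches the paper's conclusion by the same underlying estimates, just packaged more modularly. The paper runs a single space--time duality computation: it pairs with a test function $\psi\in\htildetwo{dx}$, uses $\rho_t\le C$ to pass from $\int_\Omega dx\,(\Delta\psi)^2$ to $\int_0^1 dt\int_\Omega d\mu_t\,(\Delta\psi)^2$, applies Cauchy--Schwarz over $\Omega\times[0,1]$, identifies $\int_0^1\langle\psi,\dot{\mu}_t\rangle\,dt$ with $\langle\psi,\nu-\mu\rangle$, and finally takes the supremum over $\psi$. You instead factor the argument into three reusable pieces: the pointwise-in-time comparison $||\dot{\mu}_t||_{\htildetwoneg{dx}}\leq C^{1/2}||\dot{\mu}_t||_{\htildetwoneg{\mu_t}}$, which is exactly \cref{prop:htildetwonegInequal1} with $\mu'=\mathcal{L}^n$ and $\beta=C^{-1}$ (correctly noting that its proof only uses the density comparison, not that $\mu'$ is a probability measure); a temporal Cauchy--Schwarz exploiting optimality of $\mu_t$; and the triangle inequality for the Bochner integral in the fixed space $\htildetwoneg{dx}$. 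The regularity caveat you flag for the last step is the same one implicit in the paper's interchange of $\int_0^1\langle\psi,\dot{\mu}_t\rangle\,dt$ with $\langle\psi,\nu-\mu\rangle$, so nothing is lost; what your organization buys is that the lower bound is visibly the mirror image of the upper-bound corollaries derived from the same comparison lemma.
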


\begin{proof}
Since $\mu_t$ is optimal, \[d_{\cdspacearg{\rho}}(\mu,\nu)^2 = \int_0^1\,dt\,||\dot{\mu}_t||_{\htildetwoneg{\mu_t}}^2.\] Let $\phi_{\dot{\mu}_t} \in \htildetwo{\mu_t}$ be such that for each $t\in [0,1]$ \[\deltatwomu{\mu_t}\phi_{\dot{\mu}_t}=\dot{\mu}_t.\] Recall the definition of the $\htildetwoneg{\mu_t}$ norm, \[||\dot{\mu}_t||_{\htildetwoneg{\mu_t}}^2 = \int_\Omega\,d\mu_t\,\left(\Delta \phi_{\dot{\mu}_t}\right)^2. \]

For $\psi \in \htildetwo{dx}$, we have that
\begin{align*}
C^{1/2}||\psi||_{\htildetwo{dx}}d_{\cdspacearg{\rho}}(\mu,\nu) &= C^{1/2}\sqrt{\int_\Omega\,dx\,(\Delta \psi)^2}\sqrt{\int_0^1\,dt\,\int_\Omega\,d\mu_t\,\left(\Delta \phi_{\dot{\mu}_t}\right)^2}\\
&\geq \sqrt{\int_0^1\,dt\,\int_\Omega\,d\mu_t\,(\Delta \psi)^2}\sqrt{\int_0^1\,dt\,\int_\Omega\,d\mu_t\,\left(\Delta \phi_{\dot{\mu}_t}\right)^2}\\
&\geq \int_0^1\,dt\,\int_\Omega\,d\mu_t\,\Delta\psi \Delta\phi_{\dot{\mu}_t}\\
&= \int_0^1\,dt\,\int_\Omega\,dx\,\psi \deltatwomu{\mu_t}\phi_{\dot{\mu}_t}\\
&= \int_0^1\,dt\,\int_\Omega\,dx\,\psi \dot{\mu}_t\\
&= \int_0^1\,dt\,\totdd{}{t}\int_\Omega\,dx\,\psi\mu_t\\
&= \langle \psi, (\nu-\mu)\rangle,
\end{align*}
where the first inequality made use of the assumption $\rho_t\leq C$ and the second follows from Cauchy-Schwarz. The duality pairing in the final line is between $\htildetwo{dx}$ and $\htildetwoneg{dx}.$

By re-arranging, we have \[C^{-1/2}\left\langle \frac{\psi}{||\psi||_{\htildetwo{dx}}},(\nu-\mu)\right\rangle\leq d_{\cdspacearg{\rho}}(\mu,\nu).\]

By taking a supremum over all $\psi \in \htildetwo{dx}$, we obtain \[C^{-1/2}||\nu-\mu||_{\htildetwoneg{dx}}\leq d_{\cdspacearg{\rho}}(\mu,\nu).\]

\end{proof}

In the classical $W_2$ theory, the result, \cite[Chapter~7]{MR3409718}, that the constant-speed geodesic $\mu_t$, between two measures with densities bounded by $C$, has density $\rho_t\leq C$ is proven using the geodesic convexity of the $||\cdot||_{L^q}^q$, but we do not develop notions of geodesic convexity in this paper.

\section{Conclusion}\label{sec:conclusion}

We have presented a new adaptation of the celebrated Benamou-Brenier expression of the $2$-Wasserstein metric, and shown that it provides an intuitive gradient flow formulation of a class of models arising in the field of econophysics.

Around 1998, it emerged that the $W_2$ metric allowed the heat equation $\partial_t \rho_t = \Delta \rho_t$ to be understood as equivalent to $\partial_t\rho_t = \grad{W_2}{\boltzmann{\rho_t}}$ where $\boltzmann{\rho}$ is the Boltzmann entropy. The novel result of \cref{thm:gradFlowEconophysics} is that the $\cdspace$ metric allows the continuum, diffusion approximations of a class of econophysics models to be viewed as gradient flows of the (scaled) Gini coefficient of economic inequality. 

For unbiased finite agent systems, the emergence of oligarchy is generally viewed as a consequence of martingale convergence theorems \cite{MR3475485} or simpler concentration phenomena \cite{borgers2023new}. Most previous results about the monotonicity of the Gini coefficient have been on a case-by-case basis except for the works \cite{MR4544046,MR4267576}. \Cref{prop:giniMonotoneEconophysics} gives a novel and simple proof of monotonicity that remains general enough to encompass the mean-field limits of the broad class of models satisfying \cref{def:wealthConserving,def:positivityPreserving,def:unbiased}. By doing so, the inexorable increase of inequality for these models, which can be considered a ``thermodynamic second law of econophysics,'' is established. 

Moreover, adhering to F. Otto's apothegm about delineating the kinetics of a system from its energetics allowed the intuitive and elegant gradient structure of this class of models to emerge. It is gratifying that once the kinetics of the collisional transaction were relegated to the metric tensor, the energetic functional that expressed each model as a gradient flow is a simple affine transformation of the classical Gini coefficient of economic inequality.

With \cref{thm:gradFlowEconophysics} it is now possible to see that these models may be viewed as maximally increasing inequality, as measured by the Gini coefficient, at each instant in time. In \cite{MR4961432}, casting the DLSS equation as a gradient flow in the diffusive transport metric yields a discretization that retains many essential structural properties of the continuum equation. This suggests that exploiting the gradient flow formulation in \cref{thm:gradFlowEconophysics} may likewise provide a fruitful basis for the numerical simulation of econophysics equations.

The work above is reminiscent of the search for noncanonical Poisson brackets for complex fluids \cite{MR1627532}. In which the Hamiltonian functionals could be more physically meaningful once the canonical brackets were jettisoned, in favor of those less standard. In this work, the Onsager operator and the free-energy functional generate the time evolution of the system whereas in Hamiltonian systems, it is the Poisson bracket and the Hamiltonian. The conserved quantities of \cref{lem:ConservedQuant} play a similar role to the Casimir invariants of Hamiltonian dynamics generated by a Poisson structure.

While the idea of econophysics motivated this new adaptation of the Wasserstein metric, the use of the theory is not restricted to econophysics alone. As part of presenting a more rounded study of these structures beyond econophysics, different transport inequalities were proven and the spaces $\htildetwo{\mu}$ and $\htildetwoneg{\mu}$ analyzed.

We end with a few questions appropriate for follow on study: Is there a signature of the fourth-order Onsager operator present in the discrete model, \cref{eq:microtransaction}? Is there a suitable notion of geodesic convexity in $\cdspace$ that would lead to convergence results? \cite{MR4987462} studies this question for $C_\rho$ in one spatial dimension. Does the $\cdspace$ gradient structure of the econophysics models lend itself to improved numerical methods? \cite{MR4961432} certainly suggests so. Does this metric structure have a static formulation, that is an interpretation through the original lens of optimal transport (push forwards, cost functions, etc.)? Martingale optimal transport, \cite{MR4003563}, nearly gives a static formulation for $C_\rho$. Can our construction be extended to discrete distributions over $\mathbb{N}$ by way of generalizing both the above work and the theory developed in \cite{MR3567821} and \cite{MR2824578}?

\begin{appendix}

\section{Proof of \texorpdfstring{\cref{prop:nosmoothfuncYSM}}{Proposition}}\label{apdx:noGoProof}

If there exists $F$ such that an operator $L = \frechet{F}{\rho}$ then we call $L$ a potential operator and $F$, its potential, is given by \cref{prop:frechetAnti}. This can be used to test the potentiality of operators in a constructive manner.

\begin{proposition}\label{prop:frechetAnti}
Let $F[\rho]$ be a functional with Fr\'echet derivative $\frechet{F}{\rho(x)}$ then \[F[\rho]=\int_0^1\,dr\,\int_\Omega \,dx\,\frechet{F[r\rho]}{ (r\rho) (x)}\rho(x) + C\] for some $C\in\mathbb{R}$.
\end{proposition}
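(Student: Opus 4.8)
The plan is to reduce the claim to the one–dimensional fundamental theorem of calculus by restricting $F$ to the ray $\{r\rho : r\in[0,1]\}$ issuing from the zero density. Fix a density $\rho$ and set $g:[0,1]\to\mathbb{R}$, $g(r)=F[r\rho]$. Then $F[\rho]=g(1)$, and the asserted identity is just $g(1)=g(0)+\int_0^1 g'(r)\,dr$ once we show that $g'(r)=\int_\Omega\,dx\,\frechet{F[r\rho]}{(r\rho)(x)}\,\rho(x)$ and recognize the additive constant as $C:=g(0)=F[0]$, which is independent of $\rho$.

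First I would compute $g'(r)$. The curve $r\mapsto r\rho$ is affine with constant velocity $\rho$, so writing $(r+h)\rho = r\rho + h\rho$ as a perturbation of the base point $r\rho$ in the direction $\rho$, the definition of the first $L^2$ variation (i.e. the Fr\'echet derivative $\frechet{F}{\rho}$) gives
\[
g'(r)=\lim_{h\to 0}\frac{F[r\rho+h\rho]-F[r\rho]}{h}=\left\langle\frechet{F[r\rho]}{(r\rho)},\rho\right\rangle_{L^2}=\int_\Omega\,dx\,\frechet{F[r\rho]}{(r\rho)(x)}\,\rho(x),
\]
which is precisely the inner integrand in the statement.

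Next I would integrate. Assuming $g\in C^1([0,1])$, the fundamental theorem of calculus yields
\[
F[\rho]=g(1)=g(0)+\int_0^1\,dr\,g'(r)=F[0]+\int_0^1\,dr\,\int_\Omega\,dx\,\frechet{F[r\rho]}{(r\rho)(x)}\,\rho(x),
\]
so taking $C:=F[0]$ completes the argument; this is the constructive ``antiderivative'' that underlies the potentiality test used for \cref{prop:nosmoothfuncYSM} (cf. \cite{Vainberg1964}).

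The hard part is not the computation but the standing hypotheses that make it rigorous. Two issues deserve care: (i) the whole segment $\{r\rho\}_{r\in[0,1]}$ must lie in the domain of $F$, which forces one to view $F$ as defined on a cone of nonnegative densities rather than on probability measures alone (since $r\rho$ has mass $r$, not $1$) --- this is harmless for the entropy-- and Gini--type functionals considered here, which extend canonically; and (ii) one needs enough regularity of $\rho\mapsto\frechet{F}{\rho}$ along the segment --- continuity and local boundedness suffice --- to guarantee that $g$ is $C^1$ and to justify passing the limit through the $x$--integral defining $g'(r)$ by dominated convergence. In keeping with the formal spirit of the paper I would state these as standing assumptions rather than establish them.
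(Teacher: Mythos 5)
Your proposal is correct and is essentially the same argument as the paper's: both define $g(r)=F[r\rho]$ (the paper calls it $f_\rho(r)$), compute $g'(r)=\int_\Omega\,dx\,\frac{\delta F[r\rho]}{\delta(r\rho)(x)}\rho(x)$ from the definition of the first $L^2$ variation along the affine ray, and conclude by the fundamental theorem of calculus with $C=F[0]$. Your added remarks on the domain and regularity hypotheses are sensible but go beyond what the paper records.
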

This is a result that can be derived from the more general statement of the Fundamental Theorem of Calculus for the Gateaux derivative, but we prove it more simply here.

\begin{proof}
Fix $\rho$ in the domain of $F$ and define $f_\rho(r)=F[r\rho]$, then \begin{align*}
\totdd{f_\rho(r)}{r} &= \lim_{\epsilon \rightarrow 0}\frac{f_\rho(r+\epsilon)-f_\rho(r)}{\epsilon}\\
&= \lim_{\epsilon \rightarrow 0}\frac{F[r\rho+\epsilon \rho] - F[r\rho]}{\epsilon}\\
&= \int_\Omega\,dx\,\frechet{F[r\rho]}{(r\rho)(x)}\rho(x).
\end{align*}
Therefore \begin{align*}\int_0^1\,dr\,\totdd{f_\rho(r)}{r} &= f_\rho(1)-f_\rho(0) \\ &= F[\rho] -F[0] \\ &= \int_0^1\,dr\,\int_\Omega\,dx\,\frechet{F[r\rho]}{(r\rho)(x)}\rho(x). \end{align*}
\end{proof}

In this section, $\diff{w}{\rho}$ refers to the specific diffusion coefficient \begin{equation}\label{eq:ysmDiff}\frac{\gamma}{2}\int_{\mathbb{R}^+}\,dy\, (w\minop y)^2\rho_t(y)\end{equation} of the yard-sale model.

\begin{proposition}\label{prop:apdxNoGo}
Let $\Omega = \mathbb{R}^+$. There is no $C^1$ $F:\ptwoac \rightarrow \mathbb{R}$ such that the yard-sale model of \cref{eq:ysmPDE} is realized as the gradient system $\left(\ptwoac, \mathbb{K}_\text{Otto}, F\right)$.
\end{proposition}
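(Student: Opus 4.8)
The plan is to show that the right-hand side of \cref{eq:ysmPDE}, once forced into the shape that a $\kotto$-gradient system demands, cannot be the first $L^2$ variation of any $C^1$ functional, because it fails the classical potentiality (symmetry) criterion underlying \cref{prop:frechetAnti}. First I would reduce: suppose $F$ with $L:=\frechet{F}{\rho}$ realizes \cref{eq:ysmPDE} as the gradient system $\left(\ptwoac,\kotto,F\right)$. In one spatial dimension $\kotto[L]=-\ddw{}\!\left(\rho\,\ddw{L}\right)$, so $-\ddw{}\!\left(\rho\,\ddw{L}\right)=\twoddw{}\!\left(\diff{w}{\rho}\rho\right)$. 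Integrating once in $w$ and fixing the constant of integration to zero — forced by the natural no-flux boundary conditions $\rho_t(0)=0$, $\rho_t(w)\to 0$ as $w\to\infty$, together with the fact that $\diff{w}{\rho}\rho$ and its $w$-derivative vanish at $w=0$ — yields $\ddw{L}=-\tfrac{1}{\rho}\,\ddw{}\!\left(\diff{w}{\rho}\rho\right)$.

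Next I would make $L$ explicit. Differentiating \cref{eq:ysmDiff} gives $\ddw{}\diff{w}{\rho}=\gamma w\int_w^\infty\rho(y)\,dy$, hence the candidate variation is determined up to an additive constant by
\[\ddw{L}(w)=-\gamma w\int_w^\infty\rho(y)\,dy-\diff{w}{\rho}\,\ddw{}\log\rho(w).\]
The obstruction should be read off from the second term, in which the \emph{nonlocal} quantity $\diff{w}{\rho}$ multiplies the purely \emph{local}, nonlinear quantity $\ddw{}\log\rho$. I would then invoke the potentiality characterization (the Gateaux-derivative version of \cref{prop:frechetAnti}; see \cite{Vainberg1964,MR4331350,MR3014456}): a $C^1$ functional $F$ with $\frechet{F}{\rho}=L[\rho]$ exists only if the Gateaux derivative $\rho\mapsto DL[\rho]$ is symmetric as a bilinear form, i.e.\ $\int_{\mathbb{R}^+}\!\left(DL[\rho]h_1\right)h_2=\int_{\mathbb{R}^+}\!\left(DL[\rho]h_2\right)h_1$ for every admissible $\rho\in\ptwoac$ and all tangent directions $h_1,h_2$ (which in $\ptwoac$ are mean-zero). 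Differentiating the displayed formula along $h$ — using $D\!\left[\ddw{}\log\rho\right][h]=\ddw{}(h/\rho)$ and $D\!\left[\diff{w}{\rho}\right][h]=\diff{w}{h}$ with $\diff{w}{h}=\tfrac{\gamma}{2}\int(w\minop y)^2 h(y)\,dy$ — gives an explicit expression for $\ddw{}\!\left(DL[\rho]h\right)$; integrating against $h_2$ and antisymmetrizing in $(h_1,h_2)$ produces a bilinear form whose antisymmetric part I claim is not identically zero.

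To finish, it suffices to exhibit a single triple $(\rho,h_1,h_2)$ violating the symmetry. I would take $\rho$ a convenient density on $\mathbb{R}^+$ meeting the boundary conditions (e.g.\ a Gamma-type profile, so that $\ddw{}\log\rho$ is a simple rational function and the first-moment normalization is easy to arrange), and $h_1,h_2$ narrowly supported bumps approximating a pair of Dirac masses at distinct points $a\neq b$, so that the double integrals collapse to the evaluation of a finite kernel and the asymmetry reduces to comparing two numbers. The mismatch is structural: the one-sided kernel $\gamma w\,\mathbf{1}\{y>w\}$ coming from $\ddw{}\diff{w}{\rho}$ and the strictly local derivative of $\log\rho$ probe $\rho$ in incompatible ways, and no antisymmetrizing cancellation occurs. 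This contradicts the potentiality criterion, so no such $C^1$ $F$ exists. Alternatively, and more constructively, one can form the unique candidate $\widetilde{F}$ from $L$ via \cref{prop:frechetAnti} and verify directly that $\frechet{\widetilde{F}}{\rho}\neq L$.

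The hard part will be the last paragraph: computing $DL[\rho]h$ correctly, controlling the integration-by-parts boundary terms on the half-line and the formally singular behaviour of $\ddw{}\log\rho$ near $w=0$, and choosing $(\rho,h_1,h_2)$ so that the violation is exhibited with a minimum of computation while keeping $h_1,h_2$ genuine $W_2$ tangent vectors (mean-zero), so that the contradiction lands squarely inside the claimed gradient structure $\left(\ptwoac,\kotto,F\right)$.
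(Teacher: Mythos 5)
Your reduction — forcing $\kotto$-gradient form, integrating twice, killing the integration constant, and arriving at the operator $L$ with $\ddw{L}=-\ddw{}\diff{w}{\rho}-\diff{w}{\rho}\ddw{}\log\rho$ — is exactly where the paper's proof in \cref{apdx:noGoProof} begins (the sign is a harmless convention, and your boundary-condition argument for the constant reaches the same conclusion the paper gets by excluding the divergent $\int_0^1 dr/r$ term). From there you take a genuinely different route: you propose the Vainberg symmetry criterion, testing whether $h\mapsto DL[\rho]h$ is a symmetric bilinear form on mean-zero directions, whereas the paper runs the equivalent criterion constructively, building the unique candidate $F$ via \cref{prop:frechetAnti} and computing $\frechet{F_L}{\rho}-L$ explicitly. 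Your restriction to mean-zero $h_i$ is the right way to quotient out the $w$-independent integration constant, and your observation that the purely bilinear piece $\diff{w}{\rho}$ is automatically potential (symmetric kernel $(w\minop y)^2$) correctly isolates the obstruction in the term $\int^w\diff{y}{\rho}\ddy{\log\rho}\,dy$. If completed, your version would localize the failure in a single antisymmetric form rather than in the paper's change-of-order-of-integration manipulations of $F_L$; the two are logically interchangeable.

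The gap is that the decisive step is asserted rather than performed. The statements \emph{``the antisymmetric part I claim is not identically zero''} and \emph{``no antisymmetrizing cancellation occurs''} are the entire content of the proposition: linearizing the problematic term produces both $\int^w\diff{y}{h}\ddy{\log\rho}\,dy$ and $\int^w\diff{y}{\rho}\ddy{}(h/\rho)\,dy$, and it is not a priori obvious that their antisymmetric parts fail to cancel against each other — indeed the paper's parallel computation shows that substantial cancellation \emph{does} occur, leaving only the residue $\tfrac{1}{2}\bigl[\diff{w}{\rho\log\rho}+(1+\log\rho)\diff{w}{\rho}-\tfrac{w}{\rho}(\int_w^\infty\rho)^2\bigr]$, whose nonvanishing is what closes the argument. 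Until you exhibit a concrete $(\rho,h_1,h_2)$ with two unequal numbers (or the analogous nonzero residue), nothing is proved. Two further points to repair in the execution: a single narrow bump is not mean-zero, so your test directions must be dipoles, i.e.\ differences of bumps at distinct locations, which multiplies the kernel evaluations you must compare; and $\ddw{}(h/\rho)$ is singular where $\rho$ vanishes (in particular near $w=0$ under the stated boundary condition $\rho(0)=0$), so the bilinear form must be shown to be well defined on your chosen test data before the asymmetry comparison is meaningful.
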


\begin{proof}
Without loss of generality, set $\gamma = 1$. We seek to show that there does not exist a functional $F$ such that \[\wtwograd{F[\rho]}{w} = \twoddw{}\left(\diff{w}{\rho}\rho\right).\]

Our strategy is to assume the existence of $F$ so that we can attempt its construction by \cref{prop:frechetAnti} at which point the candidate functional will have its Fr\'echet derivative taken to see if it is the operator with which we began.

For simplicity, we incorporate the overall negative of the left hand side into the definition of $F$. Such a functional would satisfy \begin{equation}\label{eq:condOnF}\ddw{}\frechet{F[\rho]}{\rho} = \frac{1}{\rho}\left(\ddw{\left(\diff{w}{\rho}\rho\right)} + \alpha_1\right),\end{equation} where $\alpha_1 \in \mathbb{R}$.

Integrating \cref{eq:condOnF}, we have that \begin{equation}\label{eq:intCondOnF}\frechet{F[\rho]}{\rho} = \alpha_2 + \alpha_1 \int^w \,dy\, \frac{1}{\rho(y)} + \int^w\,dy\, \frac{1}{\rho}\ddy{\left(\diff{y}{\rho}\rho\right)},\end{equation} with $\alpha_2\in\mathbb{R}.$

Expanding the third term of \cref{eq:intCondOnF} and simplifying gives
\begin{align}
\frechet{F[\rho]}{\rho} &= \alpha_2 + \alpha_1 \int^w \,dy\, \frac{1}{\rho(y)} + \int^w\,dy\, \left[\diff{y}{\rho}\ddy{\log \rho} + \ddy{\diff{y}{\rho}}\right] \nonumber\\
&= \alpha_2 + \int^w \,dy\,\left[ \frac{\alpha_1 }{\rho(y)} +  \diff{y}{\rho}\ddy{\log \rho} \right]+ \diff{w}{\rho}.\label{eq:exprForFrechetF}
\end{align}

By Fr\'echet anti-differentiation of \cref{eq:exprForFrechetF} via \cref{prop:frechetAnti} we have a candidate functional $F$, 
\begin{align*}
F[\rho] &= \int_\Omega\,dw\,\rho(w) \left(\alpha_2 + \frac{ \diff{w}{\rho}}{2} \right) \\
&+\int_0^1\,dr\,\int_\Omega \,dw\,\left(\int^w \,dy\,\left[ \frac{\alpha_1 }{r\rho(y)} +  \diff{y}{r\rho}\ddy{\log r\rho} \right]\right)\rho(w)\\
&= \alpha_3 + \frac{1}{2}\int_\Omega\,dw\,\rho(w)\diff{w}{\rho}\\
&+ \alpha_1\int_0^1\,dr\,\frac{1}{r}\int_\Omega\,dw\,\rho(w)\int^w\,dy\,\frac{1}{\rho(y)}\\
&+ \frac{1}{2}\int_\Omega\,dw\,\rho(w)\int^w\,dy\, \diff{y}{\rho}\ddy{\log \rho}
\end{align*}
Now we must assume that $\alpha_1=0$ otherwise $F$ blows up. Thus what remains is \begin{equation}\label{eq:putativeFuncF}F[\rho] = \alpha_3 + \frac{1}{2}\int_\Omega\,dx\,\rho(w)\varfunc{w}{\rho}{D} + \frac{1}{2}\int_\Omega\,dw\,\rho(w)\int^w\,dy\, \diff{y}{\rho}\ddy{\log \rho}\end{equation} as a candidate functional for the operator \begin{equation}\label{eq:nonPotOpL}\alpha_2 + \diff{w}{\rho} + \int^w \,dy\, \diff{y}{\rho}\ddy{\log \rho} .\end{equation}

We now compute the Fr\'echet derivative of the candidate functional \cref{eq:putativeFuncF} to see if the operator from which we began, \cref{eq:nonPotOpL}, is in fact potential.

Taking the Fr\'echet derivative gives \[\frechet{F}{\rho} = \diff{w}{\rho} + \frechet{}{\rho} \underbrace{\frac{1}{2}\int_\Omega\,dw\,\rho(w)\int^w\,dy\, \diff{y}{\rho}\ddy{\log \rho}}_{=:F_L}.\]

The first term of $\frechet{F}{\rho}$ is the second term of \cref{eq:nonPotOpL},  so we work on the second term to see if $\frechet{F_L}{\rho}$ is \[L:=\int^w \,dy\,\diff{y}{\rho}\ddy{\log \rho},\] the third term of \cref{eq:nonPotOpL}.

By choosing a lower integration bound of $0$ in $F_L$ and then changing the order of integration and integrating by parts \[F_L = \frac{1}{2} \int_\Omega\,dy\,\log{\rho(y)}\left[\diff{y}{\rho}\rho(y) - y \left(\int_y^\infty\,dw\,\rho(w)\right)^2\right].\]

The Fr\'echet derivative of $F_L$ is \[\frac{1}{2} \left[\diff{w}{\rho\log\rho} + \left(1+\log\rho(w)\right)\diff{w}{\rho}-\frac{w}{\rho(w)}\left(\int_w^\infty\,dx\,\rho(x)\right)^2\right] - \int_0^w\,dx\,x\log\rho(x)\int_x^\infty\,dy\,\rho(y)\] and note that the last term is precisely $L$ by integrating by parts after noting that \[x\int_x^\infty\,dy\,\rho(y)=\ddx{}\diff{x}{\rho} .\]

In which case we have that \[0\neq\frechet{F_L}{\rho}-L = \frac{1}{2} \left[\diff{w}{\rho\log\rho} + \left(1+\log\rho(w)\right)\diff{w}{\rho}-\frac{w}{\rho(w)}\left(\int_w^\infty\,dx\,\rho(x)\right)^2\right].\]

And therefore by contrapositive we have that $L$ and \cref{eq:nonPotOpL} are not potential operators. This makes the entire construction fail since the candidate functional $F$ does not have \cref{eq:nonPotOpL} as its Fr\'echet derivative, thus proving the claim.
\end{proof}

\begin{question*}
Let $\diff{w}{\rho}$ be given by \cref{eq:ysmDiff}, the specific diffusion operator for the yard-sale model. Can $m: \mathbb{R}^+ \rightarrow \mathbb{R}^+$ be found so that \begin{equation}\alpha_2+\int^w\,dy\,\left\{\frac{1}{m(\rho(y))}\left[\alpha_1+\ddy{\left(\rho\diff{y}{\rho}\right)}\right]\right\}\label{eq:possiblePotOpYSM}\end{equation} is a potential operator (i.e., the Fr\'echet derivative of a functional)?
\end{question*}

We have seen by \cref{prop:apdxNoGo} that for $m=Id$ in \cref{eq:possiblePotOpYSM}, we can answer this question in the negative.

A systematic method of answering this question is not clear though routes may be suggested in \cite{MR0458271, MR0544433, MR0666032, MR0769972, MR0088708}. Finding an appropriate mobility function in order to ``reverse engineer'' a potential operator feels very much like a functional version of the method of integrating factors in differential equations.

Two natural guesses are $m(\rho) = \left(\rho \diff{w}{\rho}\right)$ and $m(\rho) = \left(\rho \diff{w}{\rho}\right)^{-1}$ so that some simplification is afforded in \cref{eq:possiblePotOpYSM}. These two do not work and we omit the proofs as they are in much the same spirit as the proof of \cref{prop:nosmoothfuncYSM}. A constructive method for $m$ that guarantees the potentiality of the operator \cref{eq:possiblePotOpYSM} would be quite valuable.

We have no proof that there is no nonnegative $m$ that makes \cref{eq:possiblePotOpYSM} potential, but there is worth in considering heuristically \textit{why} the Onsager operator $\kotto$ for $W_2$ is a second-order differential operator. We do so in \cref{apdx:YSMW2mDiscussion} and suggest that a fourth-order Onsager operator is suitable for the problems that we wish to treat.

\section{Modeling heuristics in favor of fourth-order Onsager operator}\label{apdx:YSMW2mDiscussion}

The discussion is given in one dimension as it serves to motivate \cref{ssec:gradFlowKaem}.

We can gain insight by first considering the time-independent equilibrium and then retroactively coming up with a time-dependent explanation of how the system arrived at equilibrium.

\subsubsection{Optimization with conservation of probability (mass)}
Let $I\subseteq\mathbb{R}$ be an interval with endpoints $a<b$. The evolution equations that classical and nonlinear mobility $2$-Wasserstein gradient flow theory treat are those with, in general, one conserved quantity: total probability. That is, let $F:\ptwoacarg{I} \rightarrow \mathbb{R}$ be a functional then recalling nonlinear mobility $2$-Wasserstein Otto calculus, \[\ddt{\rho} = \grad{W_{2,m}}{F}=\wtwoMobgrad{F}{m(\rho)}{x}\] is such that (1) $N[\rho]:=\int_I\,dx\,\rho_t(x) = 1$ for all $t$ and (2) $t\mapsto F[\rho_t]$ is an increasing function.

In which case the asymptotic behavior both extremizes $F$ and maintains the conservation of total probability. Instead of starting with the time-dependent evolution equation, we could first consider the constrained optimization problem
\begin{center}
    extremize $F[\rho]$ subject to $N[\rho] = 1$.
\end{center}
The method of Lagrange multipliers would have us solve for a solution $\rho^*$ such that \begin{align}
\frechet{F[\rho^*]}{\rho} &= \lambda \frechet{N[\rho^*]}{\rho} \nonumber \\ &= \lambda \label{eq:lagrangeMult1}, \end{align} where $\lambda \in \mathbb{R}$ is the Lagrange multiplier associated with the constraint $N[\rho]=1$. 

To construct an evolution equation such that a subset of its stationary solutions coincide with the solutions $\rho^*$ of the constrained optimization problem, consider \begin{equation*} \ddt{\rho} = L\left[\ddx{}\frechet{F}{\rho}\right],\end{equation*} where the operator $L$ has the trivial function in its kernel.

In this case, \begin{equation*} \ddt{\rho^*} = L\left[\ddx{}\frechet{F[\rho^*]}{\rho}\right]=L[0]=0\end{equation*} for a solution $\rho^*$ of \cref{eq:lagrangeMult1}, the constrained optimization problem.

Now let \begin{equation*} L[\nu] = -\ddx{}\left[\varfunc{x}{\rho}{H}\nu\right], \end{equation*} where $\varfunc{x}{\rho}{H}$ is (without loss of generality) non-negative. This additional structure makes $F$ monotone under the dynamics since \begin{equation*}\ddt{\rho} = -\ddx{}\left[\varfunc{x}{\rho}{H}\ddx{}\frechet{F}{\rho}\right]\end{equation*} so that \begin{align*} \totdd{F}{t} &= \int_I\,dx\,\frechet{F}{\rho}\ddt{\rho} \\
&= + \int_I\,dx\,\varfunc{x}{\rho}{H}\left(\ddx{}\frechet{F}{\rho}\right)^2\\
&\geq 0.
\end{align*} Moreover, this choice of $L$ means that the dynamics stop if \cref{eq:lagrangeMult1} is satisfied since $-\ddx{}\left[\varfunc{x}{\rho}{H}\ddx{\lambda}\right] = 0$ for $\lambda \in \mathbb{R}$. Note the assumed sufficient condition for the monotonicity of $F$ is the vanishing of the boundary terms\footnote{For a more involved discussion on these surface integrals in higher dimensions, see \cite[Section~4.3]{MR2118870}.} \[\left(\frechet{F}{\rho}\varfunc{x}{\rho}{H}\ddx{}\frechet{F}{\rho}\right)\bigg|_{x=a}^{x=b} = 0.\]

The way in which we chose $L$ and $\varfunc{x}{\rho}{H}$ was to make the overall operator acting on $\frechet{F}{\rho}$ be both symmetric and positive. This procedure is elegantly presented in \cite{mielke2023introduction}.

Requiring the conservation of $N[\rho]$ under the dynamics demands that  \[\left(\varfunc{x}{\rho}{H}\ddx{}\frechet{F}{\rho}\right)\bigg|_{x=a}^{x=b} = 0.\]

Evolution equations constructed in this way are not necessarily the only evolution equation such that (1) a subset of their stationary solutions coincide with the solutions of the constrained optimization problem and (2) $F$ is monotone under the evolutionary dynamics. Rather these are simple choices that encompass several important examples. 

The choice of $\varfunc{x}{\rho}{H} = \rho$ gives the linear mobility free-energy Fokker-Planck equation (in the language of \cite{MR2118870}). This is the case of the classical 2-Wasserstein gradient operator, $\kotto$.  If $\varfunc{x}{\rho}{\mathcal{M}}$ is non-negative then $\varfunc{x}{\rho}{H} =\varfunc{x}{\rho}{\mathcal{M}}\rho$ corresponds to the case of non-trivial, time-homogeneous mobility coefficients in \cite{MR2118870}. If $\varfunc{x}{\rho}{H}$ is set to a positive constant, then we have the Cahn-Hilliard model.

\subsubsection{Optimization with conservation of probability (mass) and first moment}
If it is known \textit{a priori} that the first moment is also a conserved quantity, we should instead consider the constrained optimization problem
\begin{equation*}
\begin{cases}
\text{extremize } &F[\rho]\\
\text{subject to } &1=N[\rho]:=\int_I\,dx\,\rho(x)\\
\text{ \quad \quad and } &\mu = W[\rho]:=\int_I\,dx\,x\rho(x).
\end{cases}
\end{equation*}

The method of Lagrange multipliers would have us solve for $\rho^*$ such that \begin{align}
0& = \frechet{F[\rho*]}{\rho} - \lambda \frechet{N[\rho^*]}{\rho} - \psi \frechet{W[\rho*]}{\rho} \nonumber\\
&= \frechet{F[\rho^*]}{\rho} -\lambda - \psi x, \label{eq:lagrangeMult2}
\end{align} where $\lambda, \psi \in \mathbb{R}$ are the Lagrange multipliers for the constraints $N[\rho]$ and $W[\rho]$, respectively. That is, the Fr\'echet derivative at the solution is an affine function.

Therefore for some operator $J$ such that $0 \in \ker(J)$, \[\ddt{\rho} = J\left[\twoddx{}\left(\frechet{F}{\rho}\right)\right]\] will cease evolving when $\frechet{F}{\rho}$ is affine. To symmetrize and make positive definite the operator acting on $\frechet{F}{\rho}$, let \[J[\nu] = \twoddx{}\left[\varfunc{x}{\rho}{H}\nu\right],\] for $\varfunc{x}{\rho}{H}\geq 0$. These choices imply $J[0] = 0$ and $\totdd{F}{t} \geq 0.$

Therefore \begin{equation}\ddt{\rho} = \twoddx{}\left[\varfunc{x}{\rho}{H}\twoddx{}\frechet{F}{\rho}\right]\label{eq:dcfeevoeq}\end{equation} describes an evolutionary system for which $F$ increases monotonically, and if both \[\left(\ddx{}\left[\varfunc{x}{\rho}{H}\twoddx{}\frechet{F}{\rho}\right]\right)\bigg|_{x=a}^{x=b}\] and
\[\left(x\ddx{}\left[\varfunc{x}{\rho}{H}\twoddx{}\frechet{F}{\rho}\right] - \varfunc{x}{\rho}{H}\twoddx{}\frechet{F}{\rho}\right)\bigg|_{x=a}^{x=b}\] vanish then the zeroth and first moments are conserved.

Whereas the case of conservation of only probability mass gave rise to the second-order operator $\kotto$, the additional conserved quantity of first moment suggests a fourth-order operator is more appropriate to consider.

If in \cref{eq:dcfeevoeq}, $\varfunc{x}{\rho}{H}$ is an integral diffusion coefficient from a kinetic asset exchange model and the functional $F$ is the scaled Gini coefficient $G$ then we have exactly the structure seen in \cref{eq:kaemDiffGini}.

\end{appendix}

\bibliographystyle{amsplain}
\bibliography{Bilbiography}
\end{document}